\numberwithin{equation}{section}
\newcommand{\eps}{\varepsilon}
\newcommand{\commentout}[1]{}
\newcommand{\la}{\lambda}
\newcommand{\cal}{\mathcal}
\newcommand{\alg}{L^1(\RR)}
\newtheorem{thm}{Theorem}[section]
\newtheorem{lemma}[thm]{Lemma}
\newtheorem{proposition}[thm]{Proposition}
\newtheorem{remark}[thm]{Remark}
\newcommand{\bes}{\begin{displaymath}}
\newcommand{\ees}{\end{displaymath}}
\newcommand{\be}{\begin{equation}}
\newcommand{\ee}{\end{equation}}
\newcommand{\ba}{\begin{eqnarray}}
\newcommand{\ea}{\end{eqnarray}}
\newcommand{\bas}{\begin{eqnarray*}}
\newcommand{\eas}{\end{eqnarray*}}
\newcommand{\B}{{\@Bbb B}}
\newcommand{\C}{{\@Bbb C}}
\newcommand{\F}{{\@Bbb F}}
\renewcommand{\P}{{\@Bbb P}}
\newcommand{\Q}{{\@Bbb Q}}
\newcommand{\bQ}{{\@Bbb Q}}
\newcommand{\N}{{\@Bbb N}}
\newcommand{\R}{{\@Bbb R}}
\newcommand{\W}{{\@Bbb W}}
\newcommand{\cA}{\@s A}
\newcommand{\cB}{\@s B}
\newcommand{\cC}{\@s C}
\newcommand{\cD}{\@s D}
\newcommand{\cE}{\@s E}
\newcommand{\cF}{\@s F}
\newcommand{\cG}{\@s G}
\newcommand{\cH}{\@s H}
\newcommand{\cI}{\@s I}
\newcommand{\cJ}{\@s J}
\newcommand{\cK}{\@s K}
\newcommand{\cL}{\@s L}
\newcommand{\cN}{\@s N}
\newcommand{\cM}{\@s M}
\newcommand{\cO}{\@s O}
\newcommand{\cP}{\@s P}
\newcommand{\cR}{\@s R}
\newcommand{\cS}{\@s S}
\newcommand{\cT}{\@s T}
\newcommand{\cV}{\@s V}
\newcommand{\cW}{\@s W}
\newcommand{\cX}{\@s X}
\newcommand{\cY}{\@s Y}
\newcommand{\cZ}{\@s Z}
\newcommand{\bma}{\@bm a}
\newcommand{\bmb}{\@bm b}
\newcommand{\bmc}{\@bm c}
\newcommand{\bmd}{\@bm d}
\newcommand{\bme}{\@bm e}
\newcommand{\bmf}{\@bm f}
\newcommand{\bmg}{\@bm g}
\newcommand{\bmh}{\@bm h}
\newcommand{\bmi}{\@bm i}
\newcommand{\bmj}{\@bm j}
\newcommand{\bmk}{\@bm k}
\newcommand{\bml}{\@bm l}
\newcommand{\bmm}{\@bm m}
\newcommand{\bmn}{\@bm n}
\newcommand{\bmo}{\@bm o}
\newcommand{\bmp}{\@bm p}
\newcommand{\bmq}{\@bm q}
\newcommand{\bmr}{\@bm r}
\newcommand{\bms}{\@bm s}
\newcommand{\bmt}{\@bm t}
\newcommand{\bmu}{\@bm u}
\newcommand{\bmw}{\@bm w}
\newcommand{\bmv}{\@bm v}
\newcommand{\bmx}{\@bm x}
\newcommand{\bx}{\@bm x}
\newcommand{\bmy}{\@bm y}
\newcommand{\bmz}{\@bm z}
\newcommand{\by}{\@bm y}
\newcommand{\bmzero}{\@bm 0}
\newcommand{\gA}{\@g A}
\newcommand{\gD}{\@g D}
\newcommand{\gJ}{\@g J}
\newcommand{\gF}{\@g F}
\newcommand{\gM}{\@g M}
\newcommand{\gR}{\@g R}
\newcommand{\e}{\mathrm e}
\newcommand{\ud }{\, \mathrm d}
\newcommand{\ude }{\mathrm d}
\newcommand{\sem}[1]{\{\e^{t#1}, t \ge 0\}}
\newcommand{\dom}[1]{\mathcal D(#1)}
\newcommand{\grae}{\lim_{\eps \to 0+}}
\newcommand{\lam}{\lambda}
\newcommand{\rla}{R_\lam}
\newcommand{\grat}{\lim_{t\to 0+}}
\newcommand{\grato}{\lim_{t\to \infty}}
\newcommand{\RR}{\mathbb R}
\newcommand{\rez}[1]{\left (\lam - #1\right )^{-1}}
\newcommand{\mc}{\mathcal}
\newcommand{\dab}{\Delta_{p,q}}
\newcommand{\wve}{\widetilde \varphi_\eps}
\newcommand{\ve}{\varphi_\eps}
\title[Diffusion approximation for a   kinetic model]
{Diffusion approximation for a simple kinetic model with asymmetric interface}
\begin{document}

\author{Adam Bobrowski}
\email{a.bobrowski@pollub.pl}
\address{Lublin University of Technology, Nadbystrzycka 38A, 20-618 Lublin, Poland.}

\author{Tomasz Komorowski}
\email{tkomorowski@impan.pl}
\address{Institute of Mathematics, Polish Academy of Sciences,
  \'{S}niadeckich 8, 00-656 Warsaw, Poland.}

\date{\today {\bf File: {\jobname}.tex.}} 

\thanks{A.~Bobrowski was supported by the National Science Centre (Poland) grant
   2017/25/B/ST1/01804. T. Komorowski  acknowledges the
support of the  Polish  National Science Centre:
Grant No.  2020/37/B/ST1/00426.}

\begin{abstract}We study a diffusion approximation for a
 model of stochastic motion {of a
particle} in one spatial dimension. The velocity of the particle is
constant but the direction of the motion
undergoes random changes with a Poisson clock. Moreover, the particle
interacts with an interface in such a way that it can
randomly be reflected, transmitted, or killed, and the corresponding probabilities depend on
whether the particle arrives at the interface from the left, or
right.  We prove that the limit process is a {\em minimal Brownian
  motion},  if the probability of killing is positive. In the case
of no killing, the limit is  a {\em  skew Brownian motion}. \end{abstract}

\thanks{Version of \today}

\subjclass{47D07, 47D09, 45K05, 45M05}
 \keywords{Diffusion approximation, asymmetric Brownian motion, trace of boundary,
   stochastic evolution with a reflection/transmission/killing at an interface, skew Brownian motion}

\maketitle

\vspace{-0cm}
\section{Introduction}
 
The paper is devoted to  the following model of stochastic motion {of a
particle} on
two copies of real line (see Figure \ref{rys1}). When on the upper
copy, denoted by $\RR \times \{1\}$, {the particle moves}
deterministically to the right with a constant velocity, which we
normalize to be equal to one; when on the lower copy $\RR \times
\{-1\}$ {it moves} to the left with the same normalized
velocity. {At the points {$(0,\pm1)$} there is an interface, which
randomly perturbs the deterministic motion and enables the particle
to switch between the upper and lower copies of the real line. It is
described by four non-negative parameters $p,p',q$ and $q'$ such that both
$p+p'\le 1$ and $q+q'\le 1$. A~particle approaching the interface from
the left, thus {moving} on the upper copy, filters through the
interface with probability $p$ and continues its motion to the right on $\RR \times
\{1\}$. With probability $p'$, the particle is
reflected and starts moving to the left (from $(0,-1)$) on the lower
copy. Finally, with a possibly nonzero probability $p_0 \coloneqq 1 - p
- p'$, the particle is killed and removed from the state space.} Analogously, when approaching the interface from the right (on the lower copy), the particle filters through the interface with probability $q$, is reflected and continues its motion on the upper copy with probability $q'$, or is killed and removed from the state-space with probability $q_0 \coloneqq 1 - q - q'$. 

Additionally, {we introduce the following scattering mechanism that
allows the particle to randomly change its direction (thus switching
between the copies of the lines) in the bulk (i.e., outside the
interface):} at the epochs of a Poisson process, {independent of
the random mechanism at the interface}, the particle moving
to  the right changes its direction   to the left, and vice versa, by
jumping from one copy of the line to the other. 

{The {entire} random mechanism
  described above   is somewhat
related to the kinetic model of a motion of a phonon with an interface,  studied in
\cite{tomekkinetic,tomekkinetic3,tomekkinetic2,tomekkinetic1}}, and the telegraph
process   with elastic boundary at the origin \cite{wlosi,wlosi1}.

After presenting, in Section \ref{aits}, the semigroups  that are involved in the model, we formulate two theorems on  diffusion approximation. The first, Theorem \ref{thm:2},  concerns the case where 
there is no killing at the interface, that is,  
 \begin{equation}
\label{zal1} 
p+p'=q+q' =1. 
\end{equation} 
The result says that, when diffusively
scaled, the density  of  particles undergoing the motion described by
the model   is well approximated by {the
density of population of
particles on $\RR$ performing independent Brownian motions} with a trace of semi-permeable membrane at
$x=0$. The above means that in the space of absolutely integrable
  functions the domain of the generator $\frac12\Delta_{p,q}$ of such Brownian motions  
consists of functions $\phi$ satisfying  the  transmission conditions,
\begin{equation}
\label{int:1} 
 p\phi (0-) = q \phi (0+) \qquad \mbox{and}\qquad \phi '(0+) = \phi '(0-) 
\end{equation}
and we have $\Delta_{p,q}\phi= \phi''$ for such $\phi$.
For $p=q$ this is the generator of the standard Brownian motion; for $p\not =q$ the related process is known as skew Brownian motion  --- see \cite[p. 45 eq. (57)]{lejayskew}, \cite[p. 107]{yor97}  and \cite[pp. 115-117]{manyor}. 
Analytic properties of this processes have been recently studied
in \cite{tombatty}, see also \cite[Chapters 4 and
11]{knigazcup}.  A transmission condition that is  analogous to
\eqref{int:1} appears, for a
nonlinear parabolic equation,  in the hydrodynamic limit of a symmetric simple
exclusion process, as viewed from  a tagged particle moving under  the action of an
external constant driving force, see
  \cite[Equation (1.4)]{landim}. The proof of Theorem \ref{thm:2} is given in Section \ref{sec4}. 


Section \ref{tpf},  see Proposition \ref{prop:2} and Theorem \ref{prop:3}, 
is devoted to  the  cosine families  generated by $\dab$ and its
dual. As an application of generation theorems presented there we
provide a more detailed characterization of the process of the
Brownian motion with transmission  condition \eqref{int:1}. In particular,  in Section
\ref{sec6.3},  
closed expressions for transition probability densities of such
  processes are derived. They  reveal the asymmetric nature of the
apparently completely permeable membrane  described in the foregoing (see also Section \ref{alt1}).  


{In the second main theorem of our paper, Theorem \ref{thm:3}, we study the kinetic model in} the case in
which the killing is `effective', that is, condition $pq_0 + qp_0 +
p_0 q_0\not=0$ holds. {This scenario includes the situation in which} both
probabilities $p_0$ and $q_0$ are strictly positive. In this case, the 
processes involved,  if diffusively
scaled, are well-approximated by the minimal Brownian motion
(i.e., the Brownian motion killed at the interface), see also Remark
\ref{rmk5.2} for more details.

We note  that the case of no interface
has been extensively studied and  is well-described in the literature,
see for example \cite[Chapter 12]{ethier}. {It is intriguingly related
 to random evolutions of Griego and Hersh
\cite{gh1,gh2,pinskyrandom}, the telegraph equation and the
corresponding stochastic process  \cite{goldsteins,kac}, see also \cite{deg}. The latter is
referred throughout the literature either as the
telegraph  or   Poisson--Kac process \cite{kniga, pinskyrandom}, {and} turns out to be a
process with independent increments in the non-commutative
Kisy\'{n}ski  group, see \cite{kniga,kkac}. 
In general, the presence of
an interface    poses a technical challenge, due to the
fact that   
the domain of the generator of the semigroup describing the
 `free'  motion of a particle has to be non-trivially modified to incorporate the {transmission} condition {that describes the interface. As exemplified in this paper, the presence of the interface leads at the same time to new stochastic phenomena.} 

  Throughout the paper we adopt the following
  notation $\RR_*:=\RR\setminus\{0\}$,
  $\RR_+:= (0,+\infty)$ and $\RR_-:= (-\infty,0)$. Moreover, we use Iverson's notation: the Iverson
bracket $[P]$ equals $1$, or $0$ iff  $P$ is true, or false, respectively.

\begin{figure}
\begin{tikzpicture}[scale=0.7]
\draw [thick,->] (-5,0) -- (5,0);
\node [above] at  (4.8,0.1) {\tiny{{$\mathbb R\times\{1\}$}}}; 
\draw [thick,<-] (-5,-2) -- (5,-2);
\node [above] at  (4.8,-1.9) {\tiny{{$\mathbb R\times\{-1\}$}}}; 
\draw [thin] (0,-3)--(0,1); 
\draw [blue,->] (-2,0.1) -- (0,0.1);
\draw [blue,dashed,->] (0,0.1) -- (2,0.1);
\draw [blue,dashed,<-] (-2,-1.9) -- (0,-1.9);
\node [above] at  (1,0.2) {\tiny{Prob=$p$}}; 
\node [above] at  (-1,-1.9) {\tiny{Prob=$p'$}}; 
\draw [violet,dotted,<-] (-2,-2.1) -- (0,-2.1);
\draw [violet,<-] (0,-2.1) -- (2,-2.1);
\draw [violet,dotted,->] (0,-0.1) -- (2,-0.1);
\node [below] at  (1,-0.2) {\tiny{Prob=$q'$}}; 
\node [below] at  (-1,-2.1) {\tiny{Prob=$q$}};  
\draw [magenta,<->] (3,0)--(3,-2); 
\draw [magenta,<->] (-3,0)--(-3,-2); 
\end{tikzpicture}
\caption{\footnotesize{Random movements on two copies of real line with interface. Particles move to the left (on the lower line) and to the right (on the upper line), but may be reflected from or killed at the interface. Additionally, at random times, particles  jump from the upper to the lower line and vice versa.}}
\label{rys1}
\end{figure}

\vspace{-0.5cm}
\section{{Semigroups featured in the model}}\label{aits}


We work in the space  $L^1(S)$ of integrable functions on the {Borel}
measurable space $S\coloneqq \RR \times \{-1,1\}$ in which each copy
of {the real line} is equipped with the Lebesgue measure ${\rm m}$.
\subsection{{The semigroup corresponding to the particle motion with no scattering
  in the bulk}} 

\subsubsection{{Definition of the semigroup}}
For each $t\ge 0$, we consider {an} operator in $L^1(S)$  given by  
\begin{align*}  T(t)\varphi (x,1) & = \varphi (x-t,1) [x<0] +  \widetilde \varphi (x-t,1) [x>0] \\
T(t)\varphi (x,-1) &= \varphi (x+t,-1) [x>0] +   \widetilde \varphi (x+t,-1) [x<0], \text{ for } \varphi \in L^1(S), 
\end{align*}
where, for $ x\in \RR_*$, 
\begin{align*} \widetilde \varphi (x,1) &:= \varphi (x,1)[x>0] + (p\varphi (x,1) + q'\varphi (-x,-1) )[x<0], \\
 \widetilde \varphi (x,-1) &:=\varphi (x,-1)[x<0] + (q\varphi (x,1) + p'\varphi (-x,1) )[x>0]. 
 \end{align*} 
The family $\{T(t), t \ge 0\}$ describes the evolution of the density of particles that move according to the rules presented in Introduction, provided that switching between the
copies of the real line outside the interface is not yet allowed. The above
means  that  a given particle moves either to the right or to the left in
the bulk and its  deterministic motion is perturbed only at the
boundary.  If $\varphi \in L^1(S)$ is the density of the
population of such particles at time $0$, then $T(t)\varphi$ is its density
at time $t$.
It is clear from this description, and a direct calculation confirms
this, that $T(t)\varphi \ge 0$  and $\int_S T(t) \varphi {\ud \textrm{m}}\le
\int_S \varphi {\ud \textrm{m}}$, provided $\varphi \ge 0$, with equality
holding   when $p+p'=1$ and $q+q'=1$.  {This shows that each $T(t)$ is
a sub-Markov, or a Markov operator, respectively, in the sense of
\cite{lasota},   with the norm   $\|T(t)\| \le 1$ for all $t\ge 0$}.

Moreover, a standard {argument} shows that $\grat \|T(t)\varphi -
\varphi\|_{L^1(S)}=0$, for all $\varphi$. 
A straightforward if somewhat tedious calculation shows furthermore
that $T(t)T(s)=T(t+s)$ {(this calculation is rather unrewarding; it is
a wiser strategy to see the semigroup property as an immediate
consequence of the obvious Markovian nature of the underlying process
in the bulk, that is in $\RR_* \times\{\pm1\}$)}. The family $\{T(t), t \ge 0\}$ is therefore a strongly continuous semigroup of operators in $L^1(S)$.

\subsubsection{{The generator of $\{T(t), t \ge 0\}$}}
Turning to the task of describing the generator of this semigroup, we let the operator $A$ in $L^1(S)$ be defined as follows. Its domain $\dom{A}$ is made of $\varphi \in L^1(S)$ of the form 
\begin{align} 
\nonumber \varphi (x,1) &= (C_1 + \int_x^0 \psi (y,1) \ud y ) [x<0] + (C_2 - \int_0^x \psi (y,1) \ud y ) [x>0],\\
\label{doti:4} \varphi (x,-1) &= (C_3 - \int_x^0 \psi (y,-1) \ud y ) [x<0] + (C_4 + \int_0^x \psi (y,-1) \ud y ) [x>0],\end{align}
where $\psi \in L^1(S)$, and  the constants
$C_1,C_2,C_3$, and $C_4$ {satisfy}
\begin{equation}
\label{010408-21} 
C_2 = p C_1 + q' C_4 \quad\mbox{ and }\quad C_3 = p' C_1 + qC_4 .
\end{equation}
 In other words, a $\varphi \in \dom{A}$  is absolutely continuous in
 each of the sets $\RR_+\times \{i\} $ and $\RR_- \times
 \{i\}, i \in \{-1,1\}$ separately, with absolutely integrable
 derivatives there, and possesses finite both right and left limits at
 the points $(0,\pm 1)$   that satisfy
\begin{align} 
\varphi (0+,1) & = p \varphi (0-,1) + q' \varphi (0+,-1), \nonumber \\
\varphi (0-,-1) & = p' \varphi (0-,1) + q \varphi
                  (0+,-1). \label{doti:5} 
\end{align}
For such $\varphi, $ we define 
$$
 A \varphi (x,i) := -i \varphi '(x,i),\quad (x,i) \in \RR_*\times \{-1,1\}
  .
 $$

\begin{proposition}\label{prop:1} The operator $A$ defined above is the generator of the semigroup $\{T(t), t \ge 0\}$. \end{proposition}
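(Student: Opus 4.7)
The plan is to identify $A$ with the generator $G$ of $\{T(t),t\ge 0\}$ via two steps. First, I would verify $A\subseteq G$ by direct differentiation at $t=0^+$, showing that $\lim_{t\to 0^+}t^{-1}(T(t)\varphi-\varphi)=A\varphi$ in $L^1(S)$ for each $\varphi\in\dom{A}$. Second, I would verify that $\lambda-A$ maps $\dom{A}$ onto $L^1(S)$ for some $\lambda>0$ by solving the resolvent equation explicitly. Since $G$ generates a contraction semigroup, $\lambda\in\rho(G)$; then a standard argument (lift any $\psi\in\dom{G}$ to a preimage $\psi'\in\dom{A}$ of $(\lambda-G)\psi$ under $\lambda-A$, observe $\psi'\in\dom{G}$ by step (i), and apply injectivity of $\lambda-G$ to conclude $\psi=\psi'$) forces $\dom{G}\subseteq\dom{A}$, hence $A=G$.

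The delicate part is step (i) near the interface. Away from $x=0$, the semigroup acts by pure translation, and the representation \eqref{doti:4} shows that $\varphi$ is absolutely continuous on each of the four half-lines with derivative in $L^1$; this suffices to give $L^1$-convergence of the difference quotient to $-i\varphi'$ on the complement of any fixed neighborhood of the interface. In the shrinking slab $\{|x|<t\}$ I would analyse each of the four quadrants separately. For instance, on $\{(x,1):0<x<t\}$ the definition of $T(t)$ gives $T(t)\varphi(x,1)=\widetilde\varphi(x-t,1)=p\varphi(x-t,1)+q'\varphi(-(x-t),-1)$; the transmission condition \eqref{doti:5} collapses the constant $p\varphi(0-,1)+q'\varphi(0+,-1)$ into $\varphi(0+,1)$, so that $T(t)\varphi(x,1)-\varphi(x,1)$ is a sum of three small increments of $\varphi$ over distances $O(t)$. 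Integrating over the slab and dividing by $t$ yields an $o(1)$ contribution, so no distributional spike forms and the $L^1$-limit of the difference quotient is exactly $A\varphi$. The other three quadrants are handled symmetrically.

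For step (ii), fix $\lambda>0$ and $f\in L^1(S)$, and solve $\lambda\varphi(x,i)+i\varphi'(x,i)=f(x,i)$ on each quadrant. On the two \emph{incoming} half-lines $\RR_-\times\{1\}$ and $\RR_+\times\{-1\}$, $L^1$-integrability uniquely selects
\[
\varphi(x,1)=\int_{-\infty}^x e^{-\lambda(x-y)}f(y,1)\,\ud y\ \ (x<0),\qquad \varphi(x,-1)=\int_x^{\infty} e^{-\lambda(y-x)}f(y,-1)\,\ud y\ \ (x>0),
\]
determining $\varphi(0-,1)$ and $\varphi(0+,-1)$ explicitly in terms of $f$. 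On the two \emph{outgoing} half-lines the general $L^1$ solution consists of a decaying exponential carrying the boundary value plus a variation-of-parameters integral, leaving $\varphi(0+,1)$ and $\varphi(0-,-1)$ as two free constants that are then uniquely fixed by the two transmission conditions \eqref{doti:5}; the resulting $\varphi$ lies in $\dom{A}$ and satisfies $(\lambda-A)\varphi=f$. The main technical obstacle lies in the $L^1$-bookkeeping of step (i): the four-fold transmission condition \eqref{doti:5} is precisely what prevents a Dirac-type concentration of the difference quotient at $x=0$, while step (ii) is a routine explicit calculation once the boundary data are tracked carefully.
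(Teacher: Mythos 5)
Your proposal is correct and follows essentially the same route as the paper: step (i) is the paper's direct differentiation argument, with the transmission condition \eqref{010408-21}/\eqref{doti:5} cancelling the boundary constants in the slab $\{|x|<t\}$ so the remaining integrals of $\psi$ over $O(t)$-intervals contribute $o(1)$ in $L^1$, and step (ii) is the paper's explicit resolvent construction, with the same distinction between incoming half-lines (where $L^1$-integrability forces the solution and fixes $C_1$, $C_4$) and outgoing half-lines (where the free constants are fixed by \eqref{doti:5}). The only cosmetic difference is that you spell out the standard maximality argument ($\lambda-A$ surjective plus $\lambda-G$ injective implies $A=G$) that the paper compresses into one sentence.
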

\begin{proof}We check first that for any $\varphi \in \dom{A}$ the
  limit $\grat t^{-1} (T(t)\varphi - \varphi ) $ exists and equals
  $A\varphi$. To this end, we consider a $\varphi$ of the form
  \eqref{doti:4} and note that 
$$
 T(t)\varphi (x,1) - \varphi (x,1) = \int_{x-t}^t \psi (y,1)\ud y,
$$ 
provided that either $x<0$, or $x >t$. {By a direct
calculation,  similar to that presented in \cite[p. 10]{nagel} or
\cite[pp. 56--58]{ujacka}, one  can} show that 
\begin{equation}\label{pom1} \int_{\RR \setminus (0,t)} \Big|{ \frac 1t} [T(t)\varphi (x,1) - \varphi (x,1)] - \psi (x,1)\Big| \ud x \underset {t\to 0+} \longrightarrow 0.\end{equation}
Next, since for $x\in (0,t)$, we have
\begin{align*}
& T(t)\varphi (x,1) - \varphi (x,1) \\
&= p\Big[C_1 + \int_{x-t}^0 \psi (y,1) \ud y \Big] + q'\Big[C_4 + \int_0^{t-x}
  \psi (y,1)\ud y\Big] - C_2 + \int_0^x  \psi(y,1)\ud y 
\end{align*}
and the constants cancel out by the first   condition in
\eqref{010408-21},  {we show that} the following three integrals
\begin{align*}
 &
I_1(t):=\frac 1t \int_0^t \left |  \int_{x-t}^0 \psi (y,1) \ud y
   \right | \mathrm{d} x, \\
&
 I_2(t):= \frac 1t \int_0^t  \left |   \int_0^{t-x} \psi (y,1)\ud y
  \right | \mathrm{d} x,  \\
&
 I_3(t):=\frac 1t \int_0^t  \left |   \int_0^x  \psi(y,1)\ud y
  \right | \mathrm{d} x, 
\end{align*}
converge to zero, as $t\to
0+$. Fortunately, all of them are of similar type, and the
argument is pretty much the same in each case. For example, using
Fubini's Theorem, we can write
\begin{align*}
I_1(t) & \le \frac 1t \int_{-t}^0 \int_0^{y+t} \ud x   |\psi (y,1)| \ud
   y\\
&
   \le    \frac 1t \int_{-t}^0 |y+t|\, |\psi (x,1)| \ud y \le
   \int_{-t}^0  |\psi (x,1)| \ud y \underset {t\to 0+} \to 0.
\end{align*}
Convergence of {each $I_j(t)$ to $0$} allows to conclude that
condition \eqref{pom1} holds also when $\RR \setminus (0,t)$ is
replaced by the entire real line. Since the proof of convergence of
$\textstyle {\frac 1t}[T(t)\varphi (\cdot, -1) - \varphi (\cdot, -1) ]
$ to $\psi (\cdot, -1)$ on the other copy of the real line is similar,
we omit {it}.

This establishes the fact that the generator of  $\{T(t), t \ge 0\}$
is an extension of $A$. {We  conclude the proof by showing} that, in fact,  it coincides with $A$.
Since for $\lam >0$ the resolvent equation for the generator has
precisely one solution, it suffices to
  demonstrate that for any $\psi \in L^1(S)$
\begin{equation}
\label{C}
\mbox{there exists a $\varphi \in \dom{A}$ such that $\lam \varphi -
 A\varphi = \psi. $} 
\end{equation}
To this end, given $\psi \in L^1(S)$  {we} define
\begin{equation}
\label{012308-21}
\varphi (x,-1) := \e^{\lam x} \int_x^\infty \e^{-\lam y} \psi (y,-1)
\ud y\quad\mbox{ for }x >0.
\end{equation}
{It is straightforward to verify that 
$$
\lam \int_0^\infty |\varphi (x,-1)| \ud x \le  \int_0^\infty |\psi
(x,-1)| \ud x .
$$
Furthermore,  {computing $\lam \int_0^x \varphi (y,-1)\ud y$ from \eqref{012308-21}}, we
obtain that:
\begin{equation}\label{g1} \varphi (x,-1) = C_4 + \int_0^x [\lam \varphi (y,-1) - \psi (y,-1)]\ud y, \qquad x >0, \end{equation}
with
$
C_4 \coloneqq \int_0^\infty \e^{-\lam y} \psi (y,-1) \ud y.
$}
 Analogously, the function $\RR_- \ni x \mapsto \varphi (x , 1)$
 defined by  
$$
 \varphi (x,1) \coloneqq \e^{-\lam x} \int_{-\infty}^x  {\e^{\lam y}} \psi (y,1) \ud
 y
$$
 is absolutely integrable, and satisfies  
\begin{equation}\label{g2} \varphi (x,1) = C_1 + \int_x^0 [\lam \varphi (y,1) - \psi (y,1)]\ud y, \qquad x <0, \end{equation}
with 
$
C_1 \coloneqq \int_{-\infty}^0 \e^{\lam y} \psi( y, 1) \ud y.
$
 Finally, we define
\begin{align*} 
\varphi (x,1) &:= (pC_1 +q'C_4) \e^{-\lam x} + \int_0^x \e^{-\lam (x-y)} \psi (y,1) \ud y, \qquad x >0,\\ 
\varphi (x,-1) &:= (p'C_1 +q C_4) \e^{\lam x} \phantom{-} + \int_x^0 \e^{\lam (x-y)} \psi (y,-1) \ud y, \qquad x <0, 
\end{align*}
and
verify that these functions are absolutely integrable on $\RR_+$ and
$\RR_-$, respectively. Calculating as  {above} we establish that 
\begin{align*}
\varphi (x,1) &= (pC_1 +q'C_4) - \int_0^x [\lam \phi (y) - \psi (y) ]\ud y, \qquad x >0, \\
\varphi (x,-1) &= (p'C_1 +qC_4) - \int_x^0 [\lam \phi (y) - \psi (y) ]\ud y, \qquad x <0. 
\end{align*} 
These  relations, together with \eqref{g1} and \eqref{g2}, prove
that $\varphi$ satisfies condition \eqref{C}, which concludes the proof
of the proposition. 
  \end{proof}

 \subsection{{The semigroup with scattering}} 
{It is our next goal to  study} the evolution of the density of particles when the
possibility of random scattering (jumps between the copies of the real
lines) in the bulk is allowed. This evolution shall be described by a semigroup
corresponding to a generator obtained by a bounded perturbation of the
generator 
$A$.  For that purpose  {we} define a bounded linear operator} $B- I$, where $I$ is the identity operator in $L^1(S)$, whereas
\[ 
B\varphi (x,i) = \varphi (x,-i), \qquad (x,i)\in
\RR\times\{-1,1\},\,\varphi \in L^1(S).
\] 
{More precisely, if  particles perform only jumps between the  copies
of the real lines   
at the epochs of a Poisson process with intensity $\lam$, then the
dynamics of its law   is governed by the semigroup of Markov
operators generated by $\lam (B - I)$. Since $B$ is bounded,  the
Phillips Perturbation Theorem asserts that
the operator   
\begin{equation} 
G \coloneqq   A +  \lam (B - I),  \label{jeszcze}  
\end{equation}
is a generator of a strongly continuous semigroup $\sem{G}$ on $L^1(S)$. Since both $T(t)$ and $B$ are sub-Markovian
so is also each $\e^{tG}$ (see e.g. \cite[pp. 236--239]{lasota}).   {This} semigroup describes the evolution of
the density  of  particles undergoing the random
scattering (switching between $\RR\times\{\pm1\}$), both  at the
interface and in the bulk,  {as} described in the foregoing.

\subsection{{Diffusive scaling of the model}} 
Finally, to complete the setup, we consider the asymptotics of
the population density under the macroscopic scaling of both temporal and
spatial variables.  {To this end, we set $\lam =1$ in \eqref{jeszcze}
  and} introduce the diffusive {space-time} scaling $t'=\eps^2
t$ and $x'=\eps x$, where $\eps>0$ is a small parameter that will
eventually tend to $0$. Here $(t',x')$ and $(t,x)$ represent the 
macro- and microscopic variables. The deterministic velocity is
not scaled and remains equal to $1$.  The evolution of the particle
density profile in the macroscopic variables is then governed by  the generator  
\begin{equation} G_\eps \coloneqq \textstyle{ \frac 1\eps}  A + {  \frac 1{\eps^2}} (B - I), \label{dif1} \end{equation}
defined on $\dom{G}\coloneqq \dom{A}$, with the respective semigroup
$\{\e^{tG_\eps}, t \ge 0\}$. In what follows we  shall investigate the
asymptotics of $\{\e^{tG_\eps}, t \ge 0\}$, as $\eps \to 0$.


\section{A limit theorem: no loss of probability mass at the interface}\label{alt1} 

Our main theorem of this section says that in the case where condition \eqref{zal1} is satisfied,
 the scaling of \eqref{dif1} leads in the limit, as $\eps\to0+$, to a diffusion on a real line with a trace of semi-permeable membrane at $x=0$.

Suppose that at least one of the two numbers $p$ and $q$ is non-zero. We define the domain of 
an operator  $\dab$ in $L^1(\RR)$ to {consist} of $\phi $ of the form 
\begin{align} 
\phi (x) &= \Big(qC +Dx - \int_x^0 (x-y) \psi (y)\ud y\Big)[x<0]\label{alt2:1}
  \\ 
&\phantom{=}+  \Big(pC +Dx  {+ \int_0^x (x-y) \psi (y)\ud y}\Big)[x>0], \qquad x\in \RR_*,\nonumber \end{align}
where $C$ and $D$ are constants and $\psi$ belongs to $\alg $. {For any
such $\phi$ we let}  
\[
 \dab \phi = \psi := \phi''. 
\]  
In other words, $\phi \in \dom{\dab}$ is continuously differentiable
on each of the two half-lines {$\RR_\pm$} (separately) and $\phi'$
is absolutely continuous with absolutely integrable
$\phi''$. Moreover, the right and left limits of both $\phi$ and $\phi'$ at $x=0$ exist, are finite and  related by condition \eqref{int:1}. 

 As proved in \cite{tombatty}, see also \cite[Chapters 4 and 11]{knigazcup}, $\dab$ is the generator of a strongly continuous semigroup of Markov operators in $\alg$. 
This {operator} plays a crucial role in what follows. More
precisely, its isomorphic image will be shown to govern the diffusion
approximation to our model.  

To bring the operator $\dab$ closer to
  a reader we note first that in the symmetric case when  $p=q$, it
reduces to {the generator of the standard heat semigroup} 
in $L^1(\RR)$, see e.g. \cite[pp. 232-234]{lasota}. This agrees with
the intuition that in the limit the interface is completely invisible,
being totally permeable. However, the asymmetric case is remarkably
different. In particular, in contrast to the standard Brownian motion,
transition probabilities for the Markov process governed by $\frac 12
\dab$ are skewed either to the left or to the right, depending on
whether $p$ is smaller or larger than $q$. The related stochastic
  process has been known in the literature since 1970s (see
  \cite{ito,walsh}) as a \emph{skew Brownian motion}; it behaves like a Brownian motion except that the sign of each excursion is chosen using an independent Bernoulli random variable -- see the already cited survey article of A. Lejay \cite{lejayskew} for more information.

Results presented in \cite{tombatty} and \cite[Chapters 4 and 11]{knigazcup},
allow interpreting the skew Brownian motion differently, as a Brownian motion with  a trace of
semi-permeable membrane at $x=0$. Namely, the process {can} be
obtained as a limit of \emph{snapping out} Brownian motions  in which
$x=0$ is a semi-permeable membrane characterized by two permeability
coefficients: one for filtering from the left to the right and
another one for filtering from the right to the left,  equal to, say,
$\lam_p$ and $\lam_q$, respectively {(see \cite[Chapter 11]{knigazcup} and \cite{lejayn} for more on such Brownian motions)}. The
process related to $\frac 12 \dab$ is obtained in the limit, as both
$\la_p$ and $\la_q$ tend to infinity in such a way that $\lam_p/\lam_q=p/q$. Therefore, although $x=0$ seems to be totally permeable, there is
still  asymmetry between the way particles filter from the right to
the left and in the opposite direction. We refer to the works
cited above for more details; more information will also be
provided in Section \ref{tpf}.

Before stating the main result of this section, we  make two
  additional observations. First, obviously, the space 
$L^1(\RR)$ is isometrically isomorphic to the subspace $L_0$ of
$L^1(S)$ made of functions $\varphi $ such that $\varphi (x,1)=\varphi
(x,-1), x \in \RR$. The isomorphism we have in mind is $J:L^1(\RR)\to L_0$ given
by 
\begin{equation}
J\phi (x,i) = \textstyle{\frac 12}\phi (x), \quad (x,i)\in \RR\times\{-1,1\}, \phi \in L^1(\RR),\label{jot}
\end{equation}
{with $J^{-1}\varphi (x) = 2\varphi (x,1), x\in \RR, \varphi \in L_0$.} 
 It follows that the operators
\[ S(t) \coloneqq J \e^{t\frac 12 \dab} J^{-1},\qquad  t \ge 0 \]
form a strongly continuous semigroup of operators in $L_0$. Its
generator is 
\begin{equation}
\label{tdab}\textstyle{\frac 12} \widetilde \dab \coloneqq \textstyle{\frac 12} J\dab
J^{-1},
\end{equation}
 with the domain equal to  
$\dom{\widetilde \dab} =
J\Big(\dom{\dab}\Big)$.
That is, a $\varphi \in L_0$ belongs to
$\dom{\widetilde \dab} $ iff $J^{-1} \varphi $ is in $\dom{\dab}$
and then  $\widetilde \dab \varphi = J\dab J^{-1} \varphi  $, see e.g. \cite[Section 7.4.22]{kniga}.  

Secondly, observe that $B^2 = I$. Therefore, 
\[ \e^{t(B - I)}= (\e^{-t} \sinh t)  \, B + (\e^{-t} \cosh t )\, I \]   
implying that the strong limit  
\begin{equation}
\label{alt:1}  \grato \e^{t(B - I)} \eqqcolon P 
\end{equation}
exists and equals $\frac 12 (B+I).$ We note also that
\[ P\varphi (x,i)= \frac 12 (\varphi (x,1) + \varphi (x,-1)), \qquad (x,i)\in \RR\times \{-1,1\}.\]
Hence,
 $P: L^1(S)\to L^1(S)$ is a projection of $L^1$ onto $L_0$, which preserves the norm of non-negative elements of $L^1(S)$.

\begin{thm}\label{thm:2}  
Assume that condition \eqref{zal1} is satisfied  and  $p+q>0$. Let
$\widetilde \dab$ be the isomorphic image of $\dab$ in $L_0$, {defined
in \eqref{tdab}}.
Then
\[ 
{\grae} \e^{tG_\eps } \varphi = \e^{t\frac 12 \widetilde \dab} P \varphi, \qquad t >0, \varphi \in L^1(S), \]
{strongly in the norm of $L^1(S)$}, and the limit is uniform in
$t$ on compact subsets of $(0,\infty)$. For $\varphi \in L_0$, the
limit holds also for $t=0$ and is uniform in $t$ on compact subsets of $[0,\infty)$. 
\end{thm}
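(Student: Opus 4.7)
The plan is to apply a singular--perturbation version of the Trotter--Kato theorem for $C_0$-semigroups. The uniform bound $\|\e^{tG_\eps}\|\le 1$ (from sub-Markovianity) provides equicontinuity, so convergence of the semigroups reduces to the strong convergence, for each fixed $\lambda>0$, of the resolvents
\[
R(\lambda, G_\eps)\psi \;\longrightarrow\; J\,R(\lambda,\tfrac{1}{2}\Delta_{p,q})\,J^{-1}P\psi,\qquad \psi\in L^1(S),\ \eps\to 0+.
\]
Since $\|R(\lambda,G_\eps)\|\le\lambda^{-1}$ uniformly, it is enough to verify this on a dense subspace.

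To analyse $\varphi_\eps \coloneqq R(\lambda,G_\eps)\psi$ I would exploit the spectral structure of the fast operator: $B-I$ annihilates $L_0$ and acts as $-2I$ on $L_0^\perp\coloneqq(I-P)L^1(S)$, while $A$ interchanges $L_0$ and $L_0^\perp$ (so in particular $PAP=0$). Splitting $\varphi_\eps=\varphi_\eps^s+\varphi_\eps^a$ along $L_0\oplus L_0^\perp$ decouples the resolvent equation into two scalar relations; the antisymmetric one can be solved explicitly,
\[
\varphi_\eps^a=\tfrac{\eps}{2+\eps^2\lambda}\,A\varphi_\eps^s+\tfrac{\eps^2}{2+\eps^2\lambda}(I-P)\psi,
\]
so $\varphi_\eps^a=O(\eps)$ in $L^1$; substitution into the symmetric projection then produces the reduced equation
\[
\lambda\varphi_\eps^s-\tfrac{1}{2+\eps^2\lambda}A^2\varphi_\eps^s=P\psi+\tfrac{\eps}{2+\eps^2\lambda}A(I-P)\psi.
\]
As $\eps\to 0$ this formally becomes $\lambda\varphi^s-\tfrac12 A^2\varphi^s=P\psi$; since $A^2$ acts on $L_0$ as the bare second-derivative operator $\partial_x^2$, the natural limit is $JR(\lambda,\tfrac12\Delta_{p,q})J^{-1}P\psi$, with the domain of $\Delta_{p,q}$ to be recovered from the boundary conditions.

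The main technical obstacle is precisely this rigorous matching of boundary conditions, and it is the step where the hypothesis \eqref{zal1} is used essentially. Rewriting the four relations in \eqref{doti:5} in terms of $s_\eps(x)\coloneqq\varphi_\eps^s(x,1)$ and $a_\eps(x)\coloneqq\varphi_\eps^a(x,1)$, a short algebraic reduction using $p+p'=q+q'=1$ collapses \eqref{doti:5} to the pair
\[
a_\eps(0+)=a_\eps(0-),\qquad q\,s_\eps(0+)-p\,s_\eps(0-)=-(2-p-q)\,a_\eps(0).
\]
Since $a_\eps=O(\eps)$ the second condition degenerates to the first transmission in \eqref{int:1}, while the pointwise asymptotics $a_\eps(x)\sim-\tfrac{\eps}{2}s_\eps'(x)$ (read off from the explicit formula for $\varphi_\eps^a$) combined with $a_\eps(0+)=a_\eps(0-)$ yields $s'(0+)=s'(0-)$, the second transmission in \eqref{int:1}. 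To turn these observations into a rigorous $L^1$-statement one can solve the coupled $2\times 2$ ODE explicitly on each half-line using the characteristic exponents $\mu_\eps=\pm\sqrt{\lambda(2+\eps^2\lambda)}\to\pm\sqrt{2\lambda}$---note that the latter are precisely the characteristic exponents of $\lambda-\tfrac12\partial_x^2$---and then match integration constants across $x=0$ through \eqref{doti:5}; the resulting constants converge to those characterising $R(\lambda,\tfrac12\Delta_{p,q})$. Without \eqref{zal1}, the boundary reduction above leaves an $O(1)$ killing residual at the interface, which is exactly what generates the minimal Brownian motion of Theorem \ref{thm:3}.

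Once the resolvent convergence is established on $L^1(S)$, the Trotter--Kato theorem gives strong convergence $\e^{tG_\eps}\psi\to\e^{t\frac12\widetilde\Delta_{p,q}}P\psi$, uniformly on compact subsets of $(0,\infty)$. For $\psi\in L_0$ one has $P\psi=\psi$, and the initial relaxation layer of duration $O(\eps^2)$---during which $\e^{tG_\eps}\psi$ would otherwise average $\psi$ onto $L_0$---is trivial, so the convergence extends uniformly to compact subsets of $[0,\infty)$.
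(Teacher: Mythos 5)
Your proposal is correct in strategy, and the algebra I could check holds: under \eqref{zal1} the two (not four) conditions in \eqref{doti:5} do reduce, for $s_\eps=\varphi_\eps^s(\cdot,1)$ and $a_\eps=\varphi_\eps^a(\cdot,1)$, to $a_\eps(0+)=a_\eps(0-)$ together with $q\,s_\eps(0+)-p\,s_\eps(0-)=-(2-p-q)\,a_\eps(0)$. But you take a genuinely different route from the paper. The paper never solves the resolvent equation for $G_\eps$: it verifies the graph-convergence hypotheses of Kurtz's singular perturbation theorem by means of an explicit corrector $\wve(x,i)=\frac12\phi(x)-\frac{\eps i}{4}\phi'(x)$ --- which is exactly your asymptotic relation $a_\eps\approx-\frac{\eps}{2}s_\eps'$ promoted to an ansatz --- satisfying $\mc G_\eps\wve=\frac12\widetilde\dab\varphi_0$ identically but violating \eqref{doti:5}; the violation is then repaired by subtracting an element of the two-dimensional kernel of $\lam-\mc G_\eps$ (spanned by your homogeneous solutions $\e^{\pm\mu x}$, with $\mu$ as in \eqref{mu}), so that all of the $\eps$-analysis collapses to the determinant asymptotics \eqref{alt:3b}, where $p+q>0$ enters exactly as in your matching of integration constants. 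Your resolvent route, if completed, buys more (explicit formulas for $R(\lam,G_\eps)$), but two sketched steps must be firmed up before the proof is complete. First, ``$\varphi_\eps^a=O(\eps)$ in $L^1$'' does not control the trace $a_\eps(0)$, and $A\varphi_\eps^s$ is not a priori bounded in $L^1$ uniformly in $\eps$ (it depends on the unknown $\varphi_\eps$), so the boundary conclusions are legitimate only after the explicit variation-of-constants solution and constant-matching that you defer; until then the heuristic and rigorous layers of the argument must not be mixed. Second, the plain Trotter--Kato theorem does not apply: your limit $J R(\lam,\frac12\dab)J^{-1}P$ is a degenerate pseudo-resolvent, annihilating $(I-P)L^1(S)$ --- which is precisely why the theorem asserts convergence only for $t>0$ in general --- so you must invoke a degenerate-convergence/singular-perturbation version (e.g.\ Kurtz's theorem, as the paper does) rather than appeal informally to an ``initial relaxation layer''. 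With these two repairs your argument goes through and would constitute a valid alternative proof.
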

This theorem will be proved in Section \ref{sec4}.
\begin{remark}\rm
 Suppose that $\varphi\in L^1(S)$  is a probability
  density. Then, according to Theorem \ref{thm:2},  
 both $2\e^{tG_\eps } \varphi(\cdot,i)$, $i\in\{-1,1\}$   become
  asymptotically  equal to the density of a Brownian motion with a trace of semi-permeable membrane at $x=0$.
\end{remark}


\section{Proof of Theorem \ref{thm:2}} 

\label{sec4}
Let $\mc A$ be the extension of the
operator $A$ to the domain made of $\varphi$ of the form
\eqref{doti:4}, which   {need not}  satisfy \eqref{doti:5}, and given by
\begin{equation}
\label{cA}
\mc A \varphi (x,i) = -i \varphi'(x,i), \qquad (x,i)\in \RR_* \times
\{-1,1\}. 
\end{equation}
 Also, we let 
\begin{equation}
\label{cGe}
\mc G_\eps \coloneqq
\textstyle \frac {1}{\eps} \mc A + \frac 1{\eps^2} (B - I) 
\end{equation}
be  the extension of
$G_\eps $ defined in \eqref{dif1} to $\dom{\mc A}$.
 
\begin{lemma}\label{lemat2} 
For any  $\lam >0$  and $\eps>0$, the kernel of $\lam - \mc G_\eps $ is a two dimensional linear space spanned by $\varphi_-=\varphi_-(\eps, \lam) $ and $\varphi_+=\varphi_+(\eps, \lam )$ defined by:
\begin{align}
\nonumber 
\varphi_{-} (x,i) &\coloneqq \e^{\mu x} \Big\{ 1[i=1] + w_+
                    [i=-1]\Big\} [x<0 ]  \\
\varphi_{+} (x,i) &\coloneqq \e^{-\mu x} \left \{ 1[i=1] + w_- [i=-1]\right \} [x>0 ],\label{alt:2}
\end{align} 
 where 
\begin{equation}
\label{mu}
\mu \coloneqq \mu (\eps, \lam) = \sqrt{\lam (\lam \eps^2 + 2)} 
\end{equation} and 
\begin{equation}
\label{wpm1}
w_{\pm} = w_{\pm } (\eps, \lam ) \coloneqq \lam \eps^2 \pm \mu \eps +
1.
\end{equation}
\end{lemma}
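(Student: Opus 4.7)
The plan is to recognize the eigenvalue equation $\lam \varphi = \mc G_\eps \varphi$ as a constant-coefficient linear system of ODEs in $x$ on each of the two half-lines $\RR_-$ and $\RR_+$, solve it explicitly, and then impose integrability.

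First, I will write out the equation component-wise. On $\RR_* \times \{-1,1\}$, setting $u(x) = \varphi(x,1)$ and $v(x) = \varphi(x,-1)$, the identity $\lam \varphi = \mc G_\eps \varphi$ becomes the $2\times 2$ first-order linear system
\begin{equation*}
u'(x) = -\eps\Big(\lam+\tfrac{1}{\eps^{2}}\Big)u(x)+\tfrac{1}{\eps}v(x), \qquad
v'(x) = \eps\Big(\lam+\tfrac{1}{\eps^{2}}\Big)v(x)-\tfrac{1}{\eps}u(x),
\end{equation*}
which must hold on each of $\RR_-$ and $\RR_+$ separately; crucially, because $\mc A$ is the extension in which the transmission condition \eqref{doti:5} has been dropped, the two half-lines are completely decoupled.

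Second, I will diagonalise. Trying $u(x)=e^{\alpha x}$, $v(x)=c\,e^{\alpha x}$, the first equation gives $c=\lam\eps^{2}+1+\alpha\eps$, and substituting into the second yields the characteristic equation $\alpha^{2}=\lam(\lam\eps^{2}+2)$, so $\alpha=\pm\mu$ with $\mu$ as in \eqref{mu}. The two eigenvectors are $(1,w_{+})$ for $\alpha=\mu$ and $(1,w_{-})$ for $\alpha=-\mu$, with $w_{\pm}$ as in \eqref{wpm1}. Hence on each of $\RR_{-}$ and $\RR_{+}$ the general solution is a linear combination of $e^{\mu x}(1,w_{+})$ and $e^{-\mu x}(1,w_{-})$, giving a four-parameter family in total.

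Third, I will impose the membership of $\dom{\mc A}$. Any element of $\dom{\mc A}$ is in $L^{1}(S)$, which forces the coefficient of the $e^{-\mu x}(1,w_{-})$ branch to vanish on $\RR_{-}$ (since $\mu>0$) and the coefficient of the $e^{\mu x}(1,w_{+})$ branch to vanish on $\RR_{+}$. What survives is precisely the two-parameter family of linear combinations of $\varphi_{-}$ and $\varphi_{+}$ defined in \eqref{alt:2}. A short direct verification (of the kind I sketched above in reverse) confirms that these $\varphi_{\pm}$ lie in $\dom{\mc A}$ — they are absolutely continuous with integrable derivatives on each half-line and have finite one-sided limits at $0$ — and satisfy $(\lam-\mc G_\eps)\varphi_{\pm}=0$. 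Finally, linear independence is obvious because $\varphi_{-}$ and $\varphi_{+}$ have disjoint supports.

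I do not expect any serious obstacle: the whole computation is an elementary exercise in constant-coefficient linear ODEs, and the only small point to be careful about is the correct identification of the exponentially decaying branch on each half-line (which is unambiguous because $\mu>0$ for $\lam,\eps>0$, and because $w_{-}>0$ as $\mu\eps=\sqrt{\lam\eps^{2}(\lam\eps^{2}+2)}<\lam\eps^{2}+1$, so neither $\varphi_{-}$ nor $\varphi_{+}$ degenerates).
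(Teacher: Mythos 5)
Your proposal is correct and follows essentially the same route as the paper: both solve the constant-coefficient ODE system arising from $\lam\varphi=\mc G_\eps\varphi$ on each half-line separately (the paper eliminates $\varphi(\cdot,-1)$ to get the scalar equation $\varphi''=\mu^2\varphi$, while you diagonalise the equivalent first-order $2\times 2$ system — a cosmetic difference), and then both discard the non-integrable exponential branch on each half-line to land on the span of $\varphi_\pm$. Your closing observations (distinct eigenvalues $\pm\mu$ with $\mu>0$, disjoint supports giving independence, $w_->0$) cover the same checkpoints the paper handles via the identity $w_+w_-=1$ and the explicit verification that $\varphi_\pm$ lie in the kernel.
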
 
\begin{remark}\rm 
A simple direct calculation shows that 
\begin{equation}
\label{wpm}
w_+ w_- =1 .  
\end{equation}
Moreover, 
\begin{equation}
\grae w_{\pm} (\eps, \lam ) = 1 \quad \text{ and } \quad \grae \mu
(\eps,\lam) = \sqrt{2\lam}.\label{grae} 
\end{equation}
\end{remark}

\begin{proof}[Proof of Lemma \ref{lemat2}]For a $\varphi \in \dom{\mc A}$ to belong to the kernel
  of $\lam - \mc G_\eps$, {we need to have}
$$(\lam \eps^2 +1) \varphi - \eps \mc A \varphi = B\varphi.$$ 
{In other words, because of \eqref{cA},} 
\begin{align}\label{alt:3} 
(\lam \eps^2 + 1) \varphi (x,1) + \eps \varphi '(x,1) &= \varphi (x,-1), \nonumber \\ 
(\lam \eps^2 + 1) \varphi (x,-1)  -\eps \varphi '(x,-1) &= \varphi (x,1),\qquad x\in \RR_*.
\end{align}
These relations and the definition of $\dom{\mc A}$ (see \eqref{doti:4}) imply that the functions $\RR_* \ni x \to \varphi
'(x,i)$, $i \in \{-1,1\}$ are absolutely continuous in each half-line
$\RR_\pm$ separately, with $\varphi'' \in L^1(\RR)$. Thus,
substituting
$\varphi (x,-1)$ from the first equation of \eqref{alt:3}  into the
second one and recalling the definition of $\mu$ (cf \eqref{mu})) we obtain 
\[
\mu\varphi (x,1) = \varphi'' (x,1), \qquad x \in \RR_*.
\]
Solving this ordinary differential equation, under the constrain
that $\varphi\in L^1(S)$,  we conclude that
\[ 
\varphi (x,1) = C_1\e^{\mu x} [x<0 ] + C_2\e^{-\mu x} [ x>0]
\]
for some constants $C_1$ and $C_2$. 
Because of the first equation in
\eqref{alt:3}, it is immediate that $\varphi$ must be a linear
combination of $\varphi_-$ and $\varphi_+$. Furthermore, by a direct
calculation, using \eqref{wpm}, it can be verified that
both $\varphi_-$ and $\varphi_+$   belong to the kernel of $\lam - \mc G_\eps .$ \end{proof}


\begin{proof}[Proof of Theorem \ref{thm:2}] Relation \eqref{alt:1}
  allows us to work in the framework of the singular perturbation
  theorem of T. G. Kurtz (\cite{ethier,kurtzper,kurtzapp} or
  \cite{knigazcup}, Theorem 42.1). To prove  Theorem \ref{thm:2} we need
  to show  that 
\begin{enumerate}
\item for any $\varphi_0 \in \dom{\widetilde \dab}$ there are $\ve \in \dom{G_\eps} = \dom{A}$ such that 
$$
\grae \ve = \varphi_0\quad\mbox{ and }\quad \grae G_\eps \ve = {\textstyle \frac 12}
\widetilde \dab \varphi_0,
$$ 
\item 
for any $\phi \in \dom{A}$ we have
 $$
\grae \eps^2 G_\eps \phi = (B- I)\phi .
$$ 
\end{enumerate}
Since \emph{(ii)} follows {immediately from \eqref{dif1}}, we are left with showing \emph{(i)}.


Let $\varphi_0$ belong to $\dom{\widetilde \dab}$ and let \begin{equation}\phi
\coloneqq  J^{-1} \varphi_0,\label{fi} \end{equation} so that \eqref{alt2:1} holds for some
$C,D\in \RR$ and  $\psi \in L^1(\RR)$.  Since both $\phi$ and
$\phi''$ are in $\alg$, so is $\phi'$ (see e.g. \cite{kato} p. 192). Following \cite{ethier} p. 471, we  define 
\begin{align}
\label{tphie}
\wve (x,i) & \coloneqq  \frac 12 \phi (x) - \frac {\eps i}4 \phi' (x) \nonumber \\ 
&\phantom{:}= \varphi_0 (x,i) - \frac {\eps i}2  \varphi_0' (x,i), \qquad
  (x,i) \in \RR_*\times \{-1,1\}.
\end{align}
Then, all $\wve$, $\eps >0$ belong to $\dom{\mc A}\subset L^1(S)$. A
straightforward computation shows   that 
\begin{equation}
\label{010508-21}
\mc G_\eps \wve =
{\textstyle \frac 12} \widetilde \dab  \varphi_0
\end{equation}
(see \eqref{cGe} for the
definition of $\mc G_\eps $) whereas, obviously {from the
  definition}, 
\begin{equation}
\label{020508-21}
\grae \wve = \varphi_0.
\end{equation}   Thus, we {would  be done,}
were it not for the fact that in general $\wve\not \in
\dom{A}$. Therefore,  we need to modify the definition of $\wve$ in
order to obtain $\ve \in \dom{A}$ such that 
\eqref{020508-21} is satisfied and \eqref{010508-21} holds at least 
asymptotically.

The following construction uses the ideas of \cite{greiner}
(see also Theorem 3.1 in the more recent \cite{marta+g}),
\cite[pp. 230--232]{banbob2}  and \cite[Lemma 2.3]{banasiak}. Let
$F:\dom{\mc A} \to \RR^2$ be {a linear mapping}   given by (c.f. \eqref{doti:5})
\begin{equation}
\label{F} F 
\varphi = \begin{pmatrix}  \varphi (0+,1) - p \, \varphi (0-,1) - q'
  \varphi (0+,-1) \\
\\
\varphi (0-,-1) - p' \varphi (0-,1)  - q \, \varphi
(0+,-1) \end{pmatrix}, \qquad \varphi \in \dom{\mc A}.
\end{equation}
Note that  
\begin{equation}
\label{cAA}
\mbox{$\varphi \in \dom{\mc A}$ belongs to $\dom{A}$ iff
$F\varphi =0.$}
\end{equation}

 Let $\lam >0$ be fixed. By Lemma \ref{lemat2}, the kernel of $\lam -
 \mc G_\eps $ is a two-dimensional subspace of $L^1(S)$, and is
 therefore isomorphic to $\RR^2$. In other words, any member of this kernel that is of the form 
\begin{equation}\label{alt:3a} 
\varphi = C_1\varphi_- + C_2
  \varphi_+ \end{equation}
can be identified with the pair $(C_1,C_2)\in \RR^2$. 
The  lemma
implies further that for  $\varphi$ of this form  we have
\[ 
F\varphi 
 = \begin{pmatrix} -pC_1 + (1-q'w_-)C_2 \\ 
\\
(w_+ -p')C_1 - qw_-
   C_2 \end{pmatrix}. 
\]
By \eqref{zal1} and \eqref{wpm},  it follows that the determinant of the matrix $M_\eps $ representing the linear mapping \( \RR^2 \ni (C_1,C_2) \mapsto \varphi \mapsto F\varphi \in \RR^2\)
equals 
\begin{align*} 
\det (M_\eps)  =  pqw_- + (1-q'w_-) (p' - w_+)  =  (1-p-q)
                          (1-w_-) + 1-w_+ .
\end{align*}
Using \eqref{mu} and \eqref{wpm1} we conclude that
\begin{align} 
\det (M_\eps) = -2\lam \eps^2 - \eps (p+q) (\mu - \lam \eps).\label{alt:3b} 
\end{align}
{The determinant}  is strictly negative because $\mu >\eps
\lam $, {see \eqref{mu}}. Thus, we infer that for any $u=(u_1,u_2)\in \RR^2$,
there exists a unique $\varphi \eqqcolon K_{\lam, \eps} u$ in the
kernel of $\lam - \mc G_\eps$ such that 
\begin{equation}
\label{FK}
FK_{\lam, \eps} u = u, \quad u \in \RR^2
\end{equation}
 and 
it  is given by \eqref{alt:3a} with 
\begin{align*}
C_1= C_1 (\eps) & \coloneqq 
\frac {\det (M_{\eps,1})}{\det (M_\eps)} \coloneqq  
\frac {-qw_- u_1- (1-q'w_-)u_2}{\det (M_\eps)}, \\
C_2= C_2 (\eps) & \coloneqq 
\frac {\det (M_{\eps,2})}{\det (M_\eps)} \coloneqq  
\frac {-(w_+-p') u_1- pu_2}{\det (M_\eps)} .
\end{align*}
{Here, $M_{\eps,j}$, $j=1,2$ is the matrix formed by replacing the
$j$-th column of $M_{\eps}$ by the column vector formed by the
coordinates of $u$.}
This allows us to define
\begin{equation}
\label{phitphi}
\ve \coloneqq \wve - K_{\lam, \eps}F \wve, \qquad \eps >0 .
\end{equation}
Then $\ve$ belongs to $\dom{\mc A}$ as the difference of two
{elements} of this subspace. Moreover, $F\ve=0$, by \eqref{FK},
that is, $\ve \in \dom{A},$ see \eqref{cAA}. Also, {since $\phi$ of \eqref{fi} is of the form  \eqref{alt2:1}, by \eqref{tphie} we have:} 
$$ 
\wve (0+,i) = \frac12\Big(pC -\frac {\eps i}2 D\Big) \quad\mbox{ and } \quad \wve (0-,i) =  \frac12\Big(qC
-\frac {\eps i}2 D\Big).
$$
Hence,  it follows that 
 $u_\eps \coloneqq 
 F\wve $ (see \eqref{F}) is given by 
\[ 
u_\eps= \frac{C}{2} \begin{pmatrix} {p(1 -q -q')} \\
\\
 {q(1- p' - p)}\end{pmatrix} 
+   \frac {\eps D} 4   \begin{pmatrix}
 -(1-p+q')\\ \\
(1-q+p')\end{pmatrix} 
= \frac {\eps D} 4   \begin{pmatrix} -(1-p+q')
\\
\\ (1-q+p')\end{pmatrix} \]
because of assumption \eqref{zal1}. The respective $\det (M_{\eps, j})$,
$j=1,2$ satisfy therefore,  by \eqref{grae}, 
\begin{align*} 
\grae \frac {\det (M_{\eps, 1})}\eps &= \grae \frac D4 (qw_-(1-p+q') - (1-q'w_-)(1-q+p'))  \\
&=  \frac {qD}4 \big((1-p+q') - (1-q+p')\big) =0,
\end{align*}
and similarly
\begin{align*} 
\grae \frac {\det (M_{\eps, 2})}\eps =  \frac {pD}4 ((1-p+q') - (1-q+p')) =0.
\end{align*} 
Note  also that, see \eqref{alt:3b} and \eqref{mu},
$$
\grae \frac{\det(M_\eps)}\eps = - \sqrt {2\lam} (p +q) \not =0.
$$ 
Thus, $\lim_{\eps\to0+}C_j(\eps)=0$, $j=1,2$.

Recall that $\varphi_{\pm}(\eps, \lam )$ are the functions defined in \eqref{alt:2}.
By \eqref{grae},
their limits $\grae \varphi_{\pm}(\eps, \lam ) $ exist.  We conclude therefore that
\begin{equation}
\label{010109-21}
\grae K_{\lam,\eps} F \wve =0,
\end{equation} since
$
K_{\lam,\eps} F \wve = C_1(\eps)\varphi_-(\eps, \lam ) + C_2(\eps)
  \varphi_+(\eps, \lam ).
$
 In consequence, see \eqref{020508-21} and \eqref{phitphi},
$$\grae \ve =\grae \wve = \varphi_0 .$$ 
Finally, since $K_{\lam,\eps} F \wve \in \ker (\lam - \mc G_\eps)$, we
can write, thanks to \eqref{010508-21} and \eqref{010109-21}, that
\begin{align*}
\grae G_\eps \ve &=\grae \mc G_\eps (\wve -   K_{\lam,\eps} F\wve )= \grae (\mc G_\eps \wve - \lam K_{\lam,\eps} F\wve )\\
& =  \grae \mc G_\eps \wve= \widetilde \dab \varphi_0, 
\end{align*} as {required in (i)}. 
\end{proof}

\section{A limit theorem: loss of probability mass at the interface}
\newcommand{\cez}{C_0  [0,+\infty]}

In this section, we study our model in the case where condition \eqref{zal1} is violated. 

Let $\cez$ be the space of continuous functions $f$ on $[0,+\infty)$ such that $f(0)=0$ and the limit $\lim_{x\to +\infty} f(x) $ exists and is finite. The formula 
\[ S(t)f (x) = \int_0^\infty q_t(x,y) f(y) \ud
  y, \quad t> 0, f\in \cez, \]
where
\[
q_t(x,y):=\frac 1{\sqrt{2\pi t}}\left ( \e^{-\frac {(x-y)^2}{2t} } -
  \e^{-\frac {(x+y)^2}{2t} } \right ),\quad t>0,\,x,\,y\in \RR_+,
\]
defines a strongly continuous semigroup of operators in $\cez$ (see
e.g. \cite{kniga} Section 8.1.22, or \cite{feller} pp. 341 and
477). This semigroup describes the so-called minimal {(or killed)}
Brownian motion on $\RR_+$ in which a particle starting at $x>0$
initially behaves according to the rules of the standard Brownian
motion, but is killed and removed from the state space upon touching
$x=0$ for the first time.

\newcommand{\gen}{H}
The generator $H$ of $\{S(t), t\ge 0\}$ is defined as follows:
its domain $\dom{\gen}$ consists of twice continuously differentiable $f\in \cez$ such that $f'' \in \cez$, and $\gen f = \frac 12 f''.$ An explicit formula for $\rla \coloneqq \rez{\gen} $ reads (see again \cite{feller} p. 477): 
\begin{equation}
\label{rla}
\rla f (x) = \frac 1{\sqrt{2\lam}} \int_0^\infty \left (
  \e^{-\sqrt{2\lam}|x-y|} -  \e^{-\sqrt{2\lam}(x+y)}\right ) f(y) \ud
y, \end{equation}
for $\lam >0$ and  $f \in \cez$.

{It can be checked that} $L^1(\RR_+)$, treated as a subspace of {the
dual space $(\cez)^*$, is  invariant under the adjoint operator
$\rla^*$, and,  for any $\phi \in L^1(\RR_+)$, the formula for $\rla^* \phi $ is
given by 
 the same  expression as $\rla f $, with $f$ replaced by $\phi$.} This
 fact can be used to verify that $\rla^*$ (as restricted to
 $L^1(\RR_+)$) is the resolvent of the operator $\gen^*$, defined on
 the domain ${\cal D}(\gen^*)$ consisting of functions $\phi \in L^1(\RR_+)$ that are of the form 
\[ 
\phi (x) = Cx + \int_0^x (x-y) \psi (y) \ud y, \qquad x>0, 
\]
where $C\in \RR $ and $\psi \in L^1(\RR_+)$. It coincides with
  the space of functions  $\phi \in L^1(\RR_+)$ that have two
  generalized derivatives in $L^1(\RR_+)$ and satisfy  $\phi(0+)=0$. We have   
$
\gen^* \phi =\frac 12 \phi''
=\frac 12 \psi.
   $

Since $\gen^*$ is densely defined, and $\lam \rla^* , \lam >0$ are
sub-Markov operators, $\gen^*$ is the generator of a strongly
continuous semigroup of sub-Markov operators in $L^1(\RR_+)$, {which we
denote} $\sem{\gen^*}$ (see e.g. \cite[Corollary 7.8.1]{lasota}).  As
before, we verify  that $\e^{t\gen^*}\phi$ is formally given by the same formula as $S(t)f $ (with $f$ replaced by $\phi$). Operators $\e^{t\gen^*}, t\ge 0$ should be interpreted as follows: if $\phi$ is the initial distribution of the minimal Brownian motion, then $\e^{t\gen^*}\phi$ is its distribution at time $t\ge 0$. 

Let the domain  ${\cal D}\big(\Delta_0\big)$ of the operator $\Delta_0$ in $\alg$ be 
composed of functions $\phi $ of the form 
\begin{align} \label{alt3:1} \phi (x) &= (Cx - \int_x^0 (x-y) \psi (y)
                                        \ud y)[x<0 ] \\&\phantom{=}+
     (Dx + \int_0^x    (x-y) \psi (y) \ud y)[x>0].\nonumber
 \end{align}
Here $C$ and $D$ are real constants and $\psi $ is in $\alg$. We let
$\Delta_0 \phi := \psi (=  \phi'')$. Then, {the analysis presented above shows that} $\Delta_0$ is the generator of a
strongly continuous semigroup in $\alg$. As before, the semigroup
generated by $\frac 12 \Delta_0$ describes the minimal Brownian motion, but this time the state-space is composed of two disjoint half-axes: $\RR_- $ and $\RR_+$. In this 
process, a
particle starting at an $x\not =0$ performs a standard Brownian motion
until the first time when it touches $0$; at this moment the particle
is killed and removed from the state space. 

As in Section \ref{alt1}, in the space $L_0\subset L^1(S)$ there is an isomorphic image of the semigroup  generated by $\Delta_0$. Denoting the generator of this image by $\widetilde \Delta_0$, we obtain the following counterpart of Theorem \ref{thm:2}. Its formulation involves the probabilies
\begin{equation}
\label{p0q0}
 p_0\coloneqq  1 - p - p' \quad \text{ and } \quad q_0 \coloneqq  1 - q- q'
\end{equation}
that a particle passing through the interface is killed and removed from the state-space. 

\begin{thm}\label{thm:3} Suppose that 
\begin{equation}\label{gamma} \gamma \coloneqq  pq_0 + qp_0 + p_0 q_0\not=0. \end{equation}
Then,
\[ \grae \e^{tG_\eps } \varphi = \e^{t \frac 12 \widetilde \Delta_0} P \varphi, \qquad t >0, \,\varphi \in L^1(S), \]
(in the norm of $L^1(S)$) and the limit is uniform for $t$ in compact subsets of $(0,\infty)$. For $\varphi \in L_0$, the limit extends to $t=0$ and is uniform for $t$ in compact subsets of $[0,\infty)$. 
\end{thm}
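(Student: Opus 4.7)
The plan is to apply Kurtz's singular perturbation theorem (cf.\ \cite{ethier,kurtzper,kurtzapp} or \cite[Theorem 42.1]{knigazcup}) in the same framework used for Theorem \ref{thm:2}; relation \eqref{alt:1} again makes this applicable. As in Section \ref{sec4}, the argument reduces to verifying two conditions: (i) for every $\varphi_0 \in \dom{\widetilde \Delta_0}$ there exist $\ve \in \dom{G_\eps} = \dom{A}$ with $\ve \to \varphi_0$ and $G_\eps \ve \to \tfrac{1}{2}\widetilde \Delta_0 \varphi_0$ in $L^1(S)$; and (ii) $\eps^2 G_\eps \phi \to (B-I)\phi$ for every $\phi \in \dom{A}$. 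Condition (ii) is immediate from \eqref{dif1}, and the entire effort will go into (i).

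For (i), I would set $\phi := J^{-1}\varphi_0$, which is then of the form \eqref{alt3:1} for some $C, D \in \RR$ and $\psi \in L^1(\RR)$. The key structural feature here, absent in Section \ref{sec4}, is the killing condition $\phi(0\pm) = 0$ built into $\dom{\Delta_0}$. Following the recipe \eqref{tphie}, define $\wve(x,i) := \varphi_0(x,i) - \tfrac{\eps i}{2}\varphi_0'(x,i)$. Then $\wve \in \dom{\mc A}$, $\wve \to \varphi_0$, and the same direct computation as in Section \ref{sec4} yields $\mc G_\eps \wve = \tfrac{1}{2}\widetilde \Delta_0 \varphi_0$. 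It remains to correct $\wve$ so that the boundary defect $F\wve$ from \eqref{F} vanishes, placing $\wve$ into $\dom{A}$. Using $\varphi_0(0\pm, i) = 0$, each coordinate of $F\wve$ is a constant multiple of $\eps$, so that $F\wve = O(\eps)$ as $\eps \to 0+$.

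Following Section \ref{sec4}, set $\ve := \wve - K_{\lam,\eps} F\wve$, where $K_{\lam,\eps}:\RR^2 \to \ker(\lam - \mc G_\eps)$ is the right inverse of $F$ built from Lemma \ref{lemat2}. Then $\ve \in \dom{A}$ by \eqref{FK} and \eqref{cAA}, and
\[
G_\eps \ve = \mc G_\eps \wve - \lam K_{\lam,\eps} F\wve = \tfrac{1}{2}\widetilde \Delta_0 \varphi_0 - \lam K_{\lam,\eps} F\wve.
\]
The essential departure from Theorem \ref{thm:2} lies in the asymptotic behavior of $\det(M_\eps) = pq w_- + (1 - q'w_-)(p' - w_+)$. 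Under \eqref{zal1} this determinant vanished linearly in $\eps$, and the argument in Section \ref{sec4} required a delicate first-order cancellation in the numerator. Here, substituting $p' = 1 - p - p_0$ and $q' = 1 - q - q_0$ and using $w_+ w_- = 1$ together with $w_\pm \to 1$, a short algebraic simplification should give
\[
\det(M_\eps) \;\longrightarrow\; pq - p'q' + p' + q' - 1 \;=\; -\gamma \;\neq\; 0
\]
by hypothesis \eqref{gamma}. Consequently $\det(M_\eps)$ stays bounded away from $0$ for small $\eps$, Cramer's rule yields $C_1(\eps), C_2(\eps) = O(\eps)$, and since by \eqref{grae} the $L^1(S)$-norms of $\varphi_\pm(\eps,\lam)$ remain bounded as $\eps \to 0+$, one concludes $K_{\lam,\eps} F\wve \to 0$ in $L^1(S)$. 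This gives both $\ve \to \varphi_0$ and $G_\eps \ve \to \tfrac{1}{2}\widetilde \Delta_0 \varphi_0$, verifying (i).

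The main obstacle will be the algebraic identity $pq - p'q' + p' + q' - 1 = -\gamma$; it is routine but not immediately transparent, and it is precisely where assumption \eqref{gamma} enters decisively. Once this identity is secured, the argument is in fact simpler than that of Theorem \ref{thm:2}, because the non-vanishing of the limit of $\det(M_\eps)$ absorbs the $O(\eps)$ size of $F\wve$ automatically, without any fine-tuning of cancellations.
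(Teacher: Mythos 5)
Your proposal is correct and takes essentially the same route as the paper's own proof: both invoke the Kurtz framework, reuse $\wve$ and the corrector $\ve = \wve - K_{\lam,\eps}F\wve$, note that $\phi(0\pm)=0$ forces $F\wve = O(\eps)$, and conclude from $\grae\det(M_\eps) = -\gamma \neq 0$ that $K_{\lam,\eps}F\wve \to 0$. The identity you flag as the main obstacle is routine and does hold --- substituting $p' = 1-p-p_0$ and $q' = 1-q-q_0$ gives $pq - p'q' + p' + q' - 1 = -(pq_0 + qp_0 + p_0q_0) = -\gamma$, matching the paper's expansion of $\det(M_\eps)$ in terms of $p_0$ and $q_0$.
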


\begin{proof} We follow the argument presented in the proof of Theorem
  \ref{thm:2} and define functions $\wve$ and mapping $F$ by formulas
  \eqref{tphie} and \eqref{F}, respectively. 
Relation \eqref{alt:3b} appearing there has been derived under
  assumption \eqref{zal1},
and therefore requires modification based on relations \eqref{p0q0}.
In general, when \eqref{zal1} need not be true, 
\begin{align*}
\det (M_\eps) &= -2\lam \eps^2 - (p+q) \eps (\mu - \lam \eps)\\
&\phantom{=}
 +
[q_0(1-p) +p_0 (1-q) - p_0q_0]w_- - p_0 - q_0. 
\end{align*}
Hence, in contrast to the case considered before, $\det (M_\eps)$ is
not of the order of $\eps$. Rather, 
$\grae \det (M_\eps) = - \gamma $, which is non-zero by assumption \eqref{gamma}. In
particular, $ \det (M_\eps)$ is non-zero  for $\eps $ small enough,
and thus the operator $K_{\lam,\eps}$ is well-defined.

Since $\phi$ is of the form \eqref{alt3:1} we have
 \[\wve (0+,i) = -\frac {\eps i} 4 D \quad\text { and  }\quad  \wve (0-,i) =
   -\frac {\eps i} 4 C, 
\] 
implying (see \eqref{F}) 
\[ 
F\wve = \frac \eps 4 \begin{pmatrix} -D +pC - q' D\\ 
\\
C+ p'C
  -qD \end{pmatrix}, 
\]
and  $\grae F\wve =0$. This in turn proves that $\grae \det (M_{\eps, i})=0$,  for
$i=1,2$ and then, as in the proof of Theorem \ref{thm:2}, that $\grae K_{\lam,\eps} F\wve =0$.  The rest of the proof runs as in the aforementioned theorem.
 \end{proof}

\begin{remark}\label{rmk5.2} \rm The assumption that $\gamma \not =0$ is natural. It
  is automatically satisfied if both $p_0$ and $q_0$ are
  positive; this agrees with our intuitions well because this is the case in which killing is possible for particles approaching the interface from both sides. Condition \eqref{gamma} is also fulfilled  in the following two cases: (a) $p_0=0$ but $q_0\not =0$ and  $p>0$, and (b) $q_0=0$ but $p_0\not =0$ and  $q>0$. 
To explain the case (a): condition $p_0=0$ combined with $p=0$ describes the scenario in which all particles approaching the interface from the left are reflected. Since such particles can never filter to the right half-axis to be possibly killed there, it is clear that the minimal Brownian motion is not a good approximation for a process with $p_0=p=0$ even if $q_0\not =0$. Interpretation of (b) is similar. 
\end{remark}

\newcommand{\cab}{C_{p,q}}
\newcommand{\fal}{\widetilde}
\newcommand{\phir}{\phi_{\mathrm {right}}}
\newcommand{\phil}{\phi_{\mathrm {left}}}
\newcommand{\psir}{\psi_{\mathrm {right}}}
\newcommand{\psil}{\psi_{\mathrm {left}}}
\section{Transition probabilities for the process governed by ${\textstyle {\frac 12}} \dab$}\label{tpf}

The operator $\dab$ of Section \ref{alt1} turns out to be not only the generator of a semigroup
but also the generator of a bounded cosine family: there is a strongly continuous family $\{\cab (t), t \in \RR\}$ of equibounded operators such that $\cab (0)=I$, $\cab (t)=\cab (-t)$ for $t>0$,  and 
\[ \lam (\lam^2 -\dab )^{-1} = \int_0^\infty \e^{-\lam t} \cab (t) \ud
  t, \qquad \lam >0.\]
We recall that, by \cite[Proposition 3.14.4]{abhn} this relation implies the cosine family functional equation:
\[ 2\cab (t) \cab (s) = \cab (s+t) + \cab (t-s), \qquad s,t \in \RR.\]
 We will
find an explicit formula for  $\{\cab (t), t \in \RR\}$  using Lord Kelvin's method of
images and, as an application, will obtain closed expressions for
transition probability {densities} for the Markov process governed by $\frac 12 \dab$.

\subsection{The cosine family generated by $\dab$}\label{tcfg} 
The basic idea of the method (see \cite{kosinusy,kelvin}) is to
represent a cosine family generated by {the} Laplace operator, with
the domain described by a boundary condition, by means of the basic cosine family 
\begin{equation}
\label{Ct}
 C(t) \phi (x) = {\textstyle \frac 12} (\phi (x+t) + \phi (x-t) ),
 \qquad x,t\in \RR, \phi \in L^1(\RR) 
\end{equation}
and of a unique extension operator that is associated with the boundary condition. As explained in \cite{tombatty}, in the case of transmission conditions the method should be appropriately modified, and in particular requires constructing two, and not just one, extension operators. Here are the details (see Figure \ref{rys2}) pertaining to the transmission conditions \eqref{int:1}.

Let $\{\cab (t), t \in \RR\} $ denote the searched for cosine family
generated by $\dab$. To find {$\cab(t) \phi $ for a given $\phi \in
L^1(\RR)$ and $t\in\RR$} we first  {discard} the part of $\phi$ on the negative half-axis, and then find a way to extend $\phi$  to the entire $\RR$ so that 
\begin{equation}
\label{Ct1}
\cab (t) \phi (x)  = C(t) \fal \phir (x), \qquad   (t,x) \in \RR\times\RR_+.
\end{equation}
Here $C(t)$ is given by \eqref{Ct} and $\fal \phir:\RR \to \RR $ is an  (yet unknown) extension of the
right part of the graph of   $\phi $. 
We stress that this formula is supposed to be valid only for $x> 0$. Its counterpart for $x<0$ is obtained similarly. We cut off the part of $\phi$ on the positive half-axis, and then extend the remaining graph to that of a function on the entire real line in such a way that 
\begin{equation}
\label{Ct2} 
\cab (t) \phi (x)  = C(t) \fal \phil (x), \qquad     (t,x) \in \RR\times\RR_-.
 \end{equation}
Here $\fal \phil $ is an (also unkown) extension of the left part of the graph of $\phi$. 
Formulas \eqref{Ct1} and \eqref{Ct2}   define then the cosine 
family $\{\cab (t), t \in \RR\} $.

\begin{figure}
 \begin{tikzpicture}
 \draw [->] (-5,0) -- (5,0);
\draw [->] (0,-1.5)-- (0,2);
\draw[blue, domain=0:5, samples=600] plot (\x, {exp(-\x)*cos(deg(3*\x))}); \draw[dotted,blue, domain=-5:0, samples=600] plot (\x, {-3*exp(\x)+ 4*exp(2*\x)}); 
\draw [blue] (-5,-0.5) -- (-3,0.75) -- (-2,0.75) -- (0,0.15);
\draw [blue, dashed] (0,0.15)-- (1,1) -- (2,0.5) -- (3,1.5)-- (4,0.5)-- (5,1.5);
\node [above] at (3,1.5) {$\fal \phil$};
\node [above] at (-2.5,0.8) {$\phi$};
\node [below] at (-1,-0.6) {$\fal \phir$};

\end{tikzpicture}
\caption{Two extensions of a single function $\phi$ (solid line): extension $\fal \phil $ (dashed) of its left part, and extension $\fal \phir$ (dotted) of its right part.}
\label{rys2}
\end{figure}

Of course, the issue lies in finding   extensions $\fal \phir$
  and $\fal \phil$. {In order to conveniently work on one half-axis,}
we  shall  find   functions $\psir, $ $\psil:\RR_+\to\RR$ {related to these extensions as follows}
\[ 
\psir (x ) = \fal \phir (-x) \quad \text { and } \quad \psil (x) = \fal \phil (x),
\qquad x >0.
\] 
Here, to our aid comes the information that a cosine family leaves the
domain of its generator invariant. Hence, if $\phi $ {belongs to}
$\dom{\dab}$, and in particular conditions \eqref{int:1} are
satisfied, then these boundary conditions are also satisfied when
$\phi$ is replaced by $\cab (t) \phi$. In terms of $\psil $ and
$\psir$ this property is expressed {in} the following system of equations: 
\begin{align*}
p[\psil (t) + \phi (-t)] &= q [\phi (t) + \psir (t) ],  \\
\phi '(t) - \psir '(t) &= \psil '(t) + \phi'(-t), \qquad t >0. 
\end{align*} 
Differentiating the first of these equations and substituting $\psil
'(t)$, calculated from the second equation, into the result, we obtain 
$$
(p+q) \psir' (t) = (p-q) \phi'(t) - 2p \phi'(-t).
$$ 
Thus
\[
 \psir (t) = \frac{p-q}{p+q} \phi (t) + \frac{2p}{p+q} \phi(-t) + c
  \qquad t >0,
 \]
where $c$ is some constant.
Since we want the extensions to be   continuous,  we also require that
$\psir (0+) = \phi (0+) $. Then, this {condition} together with the first of the {relations in} \eqref{int:1} {implies} that $c=0$. Hence,
\begin{align}
\label{psir}
\psir (t) = \frac{p-q}{p+q} \phi (t) + \frac{2p}{p+q} \phi(-t). 
\end{align}
 {Similarly,}
\begin{align}
\label{psil}
\psil (t) = \frac{2q}{p+q} \phi (t) + \frac{q-p}{p+q} \phi(-t), \qquad t >0. 
\end{align}
At this point we forget about the fact that these formulae {have been}
derived under the assumption that $\phi \in \dom{\dab}$ and make an
ansatz that for $t\ge 0$ the searched for cosine family, generated by $\dab$, is given by 
\[ 2 \cab (t) \phi (x) := \begin{cases} \phi (x-t) + \phi (x+t),  
& x\in (-\infty, -t)\cup  (t,\infty), \\
 \phi (x-t) +\psil (x+t) ,& x \in (-t,0), \\
\psir (t-x)+\phi (x+t), & x \in (0,t).
 \end{cases}\] 
By \eqref{Ct}, \eqref{psir} and \eqref{psil}, this formula  has the following equivalent form: 
\begin{equation}
  \label{Cpq0}
  \cab (t)\phi (x)  = C(t)\phi (x) \quad \text{ as long as } |x| >t,
  \end{equation}
and for the remaining $x$s we have
\begin{equation}
\label{Cpq}
  \cab (t) \phi (x)  = \begin{cases} 
 C(t) \phi (x)+ \dfrac{q-p}{2(p+q)} [\phi(-x-t)+\phi (x+t)]   
, & x \in (-t,0), \\
&\\
C(t) \phi (x) + \dfrac{p-q}{2(p+q)} [\phi (t-x) +\phi(x-t)], & x \in (0,t). 
 \end{cases}
\end{equation}
Obviously, for $p=q$ we have $\cab (t)\phi =C(t)\phi$. Formula
\eqref{Cpq} shows that the same
holds also when $\phi$ is an odd function. On the other hand, if
   $\phi$ is even  we can rewrite \eqref{Cpq} in the form
\begin{equation}
\label{Cpq1}
  \cab (t) \phi (x) = C(t) \phi (x)+ \frac{q-p}{p+q}\Big[ \phi
  (x+t)1_{(-t,0)}(x)- \phi (x-t) 1_{(0,t)}(x)\Big],\quad t>0.  
\end{equation}

 \begin{proposition}\label{prop:2} {Suppose that  $p+q>0$}. Then, $\{\cab (t), t \in \RR\}$ is a
  strongly continuous cosine family and its generator is
  $\dab$. \end{proposition}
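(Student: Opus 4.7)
The plan is to reduce the proposition to two separate tasks: first, to verify that $\{\cab(t)\}_{t\in\RR}$ is a strongly continuous equibounded family in $\alg$ with $\cab(0)=I$ and $\cab(-t)=\cab(t)$; second, to establish the Laplace transform identity $\lam(\lam^2-\dab)^{-1} = \int_0^\infty \e^{-\lam t}\cab(t)\ud t$ for every $\lam>0$. Granted both, \cite[Proposition 3.14.4]{abhn} immediately delivers the cosine functional equation and simultaneously identifies $\dab$ as the generator, completing the proof.

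For the first task I would decompose $\cab(t) = C(t) + R(t)$, where $C(t)$ is the standard cosine family \eqref{Ct} (with $\|C(t)\|\le 1$) and $R(t)$ is the ``correction'' appearing in \eqref{Cpq}, supported on $|x|<|t|$. Elementary substitutions in each piece of $R(t)\phi$ yield $\|R(t)\phi\|_{\alg} \le \tfrac{|q-p|}{p+q}\int_{-|t|}^{|t|}|\phi(y)|\ud y$, which gives both uniform boundedness in $t$ and, by absolute continuity of the Lebesgue integral, $\|R(t)\phi\|_{\alg}\to 0$ as $t\to 0$. Combined with the standard strong continuity of $C(t)$ at the origin, this produces strong continuity of $\cab(t)$. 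The identities $\cab(0)=I$ and $\cab(-t)=\cab(t)$ are evident from \eqref{Cpq0}--\eqref{Cpq} once we extend to $t<0$ by symmetry.

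The second task is the computational core. Fixing $x>0$ and $\phi\in\alg$, I would split $\int_0^\infty \e^{-\lam t}\cab(t)\phi(x)\ud t$ into the $C(t)$ contribution, equal to $\tfrac{1}{2}\int_\RR \e^{-\lam|x-y|}\phi(y)\ud y$ by the standard Fubini calculation, and the $R(t)$ contribution, which by \eqref{Cpq} is supported on $t>x$ and, via the substitutions $u=t-x$ and $u=x-t$, collapses to $\tfrac{p-q}{2(p+q)}\e^{-\lam x}\int_\RR \e^{-\lam|y|}\phi(y)\ud y$. The analogous manipulation for $x<0$ yields the symmetric expression with $\e^{\lam x}$ and the opposite coefficient sign. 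Calling the sum $\phi_\lam(x)$, I would then set $\eta := \phi_\lam/\lam$ and verify that $\eta\in\dom{\dab}$ with $(\lam^2-\dab)\eta = \phi$.

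For this last verification, the PDE $\lam^2\eta - \eta'' = \phi$ on $\RR_*$ is immediate: the ``smooth'' part $\eta_p(x) := \tfrac{1}{2\lam}\int \e^{-\lam|x-y|}\phi(y)\ud y$ satisfies $\lam^2\eta_p - \eta_p'' = \phi$ on all of $\RR$ by the standard resolvent identity, while the homogeneous correction $\eta_h(x) \propto \mathrm{sgn}(x)\e^{-\lam|x|}$ is annihilated by $\lam^2-\partial_x^2$ on each half-line. The derivative match $\eta'(0+)=\eta'(0-)$ follows because $\eta_h$ is odd (hence $\eta_h'$ is even), while $\eta_p'$ is continuous at $0$. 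The value condition $p\eta(0-)=q\eta(0+)$ --- the only place where the parameters $p,q$ enter non-trivially --- reduces after a short cancellation to the vanishing of $(p-q)[\eta_p(0) - \tfrac{1}{2\lam}\int \e^{-\lam|y|}\phi(y)\ud y]$, and the bracket is indeed zero by the definition of $\eta_p$. This last identification is the main obstacle: it is the very same $p$-$q$ balance that motivated formulas \eqref{psir}--\eqref{psil} in Section~\ref{tcfg}, so the cancellation is guaranteed, but it is the only non-routine piece of bookkeeping in the argument.
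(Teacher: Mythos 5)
Your proposal is correct, and it takes a genuinely different route from the paper's. The paper in fact omits the proof of this proposition: the remark following it says only that the argument ``follows closely the proof of Proposition \ref{prop:1}'' --- i.e., a generator-side verification (one would show $\grat 2t^{-2}(\cab(t)\phi-\phi)=\dab\phi$ for $\phi\in\dom{\dab}$ and then solve the resolvent equation, as in Section 2) --- and it defers the d'Alembert functional equation to Theorem \ref{prop:3}, where the dual family is treated in $C[-\infty,+\infty]$ via Laplace transforms and the Sova--Da Prato--Giusti generation theorem. You instead verify the Laplace-transform characterization $\int_0^\infty \e^{-\lam t}\cab(t)\phi \ud t=\lam(\lam^2-\dab)^{-1}\phi$ directly in $\alg$, which is essentially the predual of the computation the paper performs for $\cab^*(t)$ in the proof of Theorem \ref{prop:3}; your key cancellation, $\eta_p(0)=\frac1{2\lam}\int_\RR \e^{-\lam|y|}\phi(y)\ud y$, is the $L^1$ shadow of the constants $D_\pm$ in Lemma \ref{lem:2}. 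I checked the substitutions: the correction for $x>0$ does collapse to $\frac{p-q}{2(p+q)}\e^{-\lam x}\int_\RR \e^{-\lam|y|}\phi(y)\ud y$, the odd function $\eta_h$ indeed has matching one-sided derivatives at $0$, and $p\eta(0-)=q\eta(0+)$ reduces exactly as you say. What your route buys: a self-contained proof in which equiboundedness with the sharp constant $\frac{2\max(p,q)}{p+q}$ of \eqref{011309-21} falls out of $\|R(t)\phi\|\le\frac{|p-q|}{p+q}\int_{-|t|}^{|t|}|\phi|$, and the functional equation comes from \cite[Proposition 3.14.4]{abhn} without any appeal to the dual space --- which is also exactly how the paper itself defines a cosine family at the start of Section \ref{tpf}. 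The paper's plan, by contrast, recycles the template of Proposition \ref{prop:1} and obtains the functional equation as a by-product of the dual-space theorem it needs anyway.

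Two points to tighten, both routine. First, you argue strong continuity only as $t\to0$, but the Bochner integral defining the Laplace transform (and the notion of a strongly continuous cosine family) requires continuity of $t\mapsto\cab(t)\phi$ on all of $[0,\infty)$; this follows from the same two ingredients you already use (strong continuity of translations in $\alg$ and the shrinking measure of the symmetric difference of the indicator supports as $s\to t$), but it should be stated. Second, concluding $\eta=(\lam^2-\dab)^{-1}\phi$ from ``$\eta\in\dom{\dab}$ and $(\lam^2-\dab)\eta=\phi$'' needs injectivity of $\lam^2-\dab$: either quote the fact, recalled in Section \ref{alt1} from \cite{tombatty}, that $\dab$ generates a semigroup (so $(0,\infty)\subset\rho(\dab)$), or check directly that an $\alg$-solution of $u''=\lam^2u$ on $\RR_*$ obeying \eqref{int:1} has the form $u(x)=a\e^{-\lam x}[x>0]+b\e^{\lam x}[x<0]$ with $b=-a$ and $pb=qa$, forcing $(p+q)a=0$ --- incidentally the only place, beyond the well-definedness of \eqref{Cpq}, where the hypothesis $p+q>0$ enters.
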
  
\begin{remark} \rm The proof follows closely the proof of  Proposition \ref{prop:1}, and hence we omit it. 
The   cosine family functional equation for $\{\cab (t), t \in \RR\}$
is a direct consequence of Proposition \ref{prop:3} presented in 
Section \ref{sec6.2} below. 
\end{remark} 

\subsection{Dual cosine family in $C[-\infty,+\infty]$} 

\label{sec6.2} 

\newcommand{\cabg}{\cab^*}
\newcommand{\dabg}{\dab^*}
\newcommand{\cer}{C[-\infty,+\infty]}

Let $\cer$ be  the space of continuous functions on $\RR$ that have
finite limits at both $+\infty$ and $-\infty$. A direct computation,
using \eqref{Cpq0} and \eqref{Cpq}, shows that the dual operators to $\cab (t) $ in $L^\infty (\RR)$ are given by 
\begin{equation}\label{jawna} \cabg (t) f (x)
  = \begin{cases}C^*(t)f(x), & |x|\ge t, \\
&\\
C^*(t)f(x)+\dfrac {q-p}{2(p+q)} [f(-x-t) - f(x+t)], & -t< x\le
0, \\
&\\
C^*(t)f(x)  +\dfrac {p-q}{2(p+q)} [f(t-x)-
f(x-t) ]  , & 0< x <  t. 
\end{cases} 
\end{equation}

\begin{remark}\rm 
Here, $C^*(t)$ is the dual to $C(t)$, given also by  formula \eqref{Ct}. 
  Interestingly, the family  $\{\cab^* (t), t \in \RR\} $  leaves the subspace $\cer$ of $L^\infty
(\RR)$ invariant. Formula
\eqref{Cpq} could be used to define a cosine family on  $L^\infty
(\RR)$, however, then  the subspace $\cer$ would not be invariant under $\{\cab (t), t
\in \RR\} $. Moreover, such a family would not be continuous in the strong
topology of $L^\infty
(\RR)$.
\end{remark} 
\begin{remark}\rm 
Interestingly, if $q>p$, the expression in the second line of \eqref{jawna} can be interpreted in probabilistic terms, but that in the third line cannot. Vice versa, if $q<p$, the third line can be interpreted probabilistically, but the second cannot. 

To wit, the second line can be equivalently written as 
\[ C(t)f (x) =  \frac 12 f(x-t) + \frac 12 \left (  \frac {2p}{p+q} f(x+t)+ \frac {q-p}{p+q} f(-x-t) \right ), \quad  -t< x\le 0 .\]    
The arguments $x-t, x+t$ and $-x-t$ used here are possible positions at time $t$ of a particle which at time $0$ starts at a point $x\in (-t,0]$ and moves according to the following rules. (a) With probability $\textstyle{\frac 12}$ it moves to the left with constant speed equal to $1$. (b) With the same probability it moves to the right with the same speed, but when it hits an interface at $0$, it either filters through the interface (with conditional probability    $\frac {2p}{p+q} $), or is reflected (with conditional probability $\frac {q-p}{p+q}$), and starts moving to the left with the same velocity as before.  

A similar interpretation of the third line is impossible, since the factor $\frac {p-q}{2(p+q)}$ appearing there  is in the case under consideration negative, and thus cannot be thought of as a probability. As we shall see in Section \ref{sec6.3}, the  entire formula for the semigroup generated by $\frac 12 \dabg$ has a natural probabilistic interpretation.  \hfill $\square$
\end{remark}

Before we prove that $\{\cabg (t), t \in \RR\}$ is a cosine family we
turn to a description of an operator that later on will be shown to be
the generator of  $\{\cabg (t), t \in \RR\}$. Namely,{ we say that an
$f\in \cer $ belongs to  $\dom{\dabg}$} if the following three
conditions are satisfied: 
\begin{itemize}
\item[(a)] $f$ is twice continuously differentiable in both
  $(-\infty,0]$ and $[0,\infty]$, separately, with left-hand and
  right-hand derivatives at $x=0$, respectively, 
\item[(b)] both the limits $\lim_{x\to \infty} f'' (x)$ and
  $\lim_{x\to -\infty} f''(x) $ exist and are finite {(it follows that, in fact, they
  have to be equal to $0$)}, and
 \item[(c)] $pf'(0+) = qf'(0-)$ and $f''(0+)=f''(0-).$  Note that this condition implies that, although $f'(0)$ need not exist, it is meaningful to speak of $f''(0)$.  
\end{itemize}
Furthermore, we
   define \(\dabg f := f'' .\)


\begin{lemma}\label{lem:2}For any $\lam >0$ and $g \in \cer$, the
  resolvent equation 
$$\lam^2 f - \dabg f = g
$$ 
has a unique solution $f\in \dom{\dabg}$ given by 
\begin{equation}\label{deff} f(x) = \begin{cases}C_+ \e^{\lam x} +D_+
    \e^{-\lam x} - \lam^{-1} \bigintss_{\,0}^x \sinh [\lam (x-y)] g(y) \ud y, &
    x \ge 0 ,\\ 
&\\
C_- \e^{-\lam x} +D_- \e^{\lam x} + \lam^{-1} \bigintss_{\,x}^0 \sinh[ \lam (x-y)] g(y) \ud y, & x \le 0 ,
\end{cases} \end{equation}
where 
\begin{equation} \label{defce} 
C_- \coloneqq \frac 1{2\lam} \int_{-\infty}^0 \e^{\lam y} g(y) \ud y \quad \text{ and } \quad C_+ \coloneqq \frac 1{2\lam} \int_0^{\infty} \e^{-\lam y} g(y) \ud y  
\end{equation}
whereas 
\begin{equation}\label{defde}
D_-\coloneqq \frac {2p}{p+q} C_+ + \frac{q-p}{p+q} C_-  \quad \text{ and } \quad D_+ \coloneqq \frac{p-q}{p+q} C_+ + \frac {2q}{p+q} C_- .\end{equation}
\end{lemma}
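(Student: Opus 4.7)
The plan is to verify directly that the explicit $f$ defined by \eqref{deff} satisfies all the requirements of membership in $\dom{\dabg}$ and solves the resolvent equation, and then to establish uniqueness using the homogeneous equation $\lambda^2 h = h''$ on each half-line.

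First, I would check existence. A straightforward variation-of-parameters computation shows that, on each open half-line $\RR_\pm$, the particular part $\mp \lambda^{-1}\int_0^x \sinh[\lambda(x-y)] g(y)\,\ud y$ satisfies $\lambda^2 v - v'' = g$, while $C_\pm e^{\pm \lambda x}+D_\pm e^{\mp \lambda x}$ is the general homogeneous solution of $\lambda^2 h = h''$. Hence $\lambda^2 f - f'' = g$ on $\RR_*$, and, since $g$ is continuous, $f$ is twice continuously differentiable on each closed half-line, giving condition (a).

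Next, for the finite limits at $\pm \infty$ (condition (b) of $\dom{\dabg}$), I would split $\sinh[\lambda(x-y)] = \tfrac12(e^{\lambda(x-y)}-e^{-\lambda(x-y)})$ and rewrite, for $x \ge 0$,
\[ f(x) = \Big[\tfrac{1}{2\lambda}\int_x^\infty e^{-\lambda y} g(y)\,\ud y\Big] e^{\lambda x} + D_+ e^{-\lambda x} + \tfrac{e^{-\lambda x}}{2\lambda}\int_0^x e^{\lambda y} g(y)\,\ud y, \]
where the point is that the specific choice of $C_+$ in \eqref{defce} precisely cancels the coefficient of $e^{\lambda x}$ at infinity. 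Substituting $u = y - x$ in the first bracket and using $g \in \cer$, dominated convergence gives its limit as $g(+\infty)/(2\lambda^2)$; an elementary L'Hospital argument applied to the last term gives the same limit. Thus $f(x) \to g(+\infty)/\lambda^2$ as $x\to +\infty$, and symmetrically $f(x) \to g(-\infty)/\lambda^2$ at $-\infty$. The limits of $f''$ at $\pm \infty$ are then automatically $0$, since $f'' = \lambda^2 f - g$ and the two limits match.

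For the transmission conditions at $0$ (condition (c)), I would compute
\[ f(0+)=C_++D_+=\tfrac{2p}{p+q}C_+ + \tfrac{2q}{p+q}C_- = C_-+D_- = f(0-) \]
by direct substitution from \eqref{defde}, and similarly
\[ f'(0+)=\lambda(C_+-D_+) = \tfrac{2q\lambda}{p+q}(C_+-C_-),\qquad f'(0-)=\lambda(D_--C_-)=\tfrac{2p\lambda}{p+q}(C_+-C_-), \]
yielding $pf'(0+)=qf'(0-)$. The equality $f''(0+)=f''(0-)$ then follows from $f''(0\pm)=\lambda^2 f(0\pm) - g(0)$ and the continuity of $f$ at $0$ just established. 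Hence $f \in \dom{\dabg}$ and solves the resolvent equation.

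For uniqueness, any two solutions differ by $h \in \dom{\dabg}$ with $h''=\lambda^2 h$ on each half-line. On $[0,\infty)$, $h(x)=a_+e^{\lambda x}+b_+e^{-\lambda x}$; finiteness at $+\infty$ forces $a_+=0$, and symmetrically on $(-\infty,0]$, giving $h(x)=b_+e^{-\lambda x}\mathbf{1}_{[0,\infty)}+b_-e^{\lambda x}\mathbf{1}_{(-\infty,0]}$. Continuity at $0$ gives $b_+=b_-$, and the transmission condition $ph'(0+)=qh'(0-)$ becomes $-(p+q)\lambda b_+=0$, forcing $b_+=0$ because of the standing assumption $p+q>0$. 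The main obstacle is the delicate asymptotic analysis at $\pm\infty$: one must see explicitly that the definition of $C_\pm$ is engineered so as to kill the unbounded exponential component of the variation-of-parameters formula, after which everything reduces to algebraic identities built into the definitions of $D_\pm$.
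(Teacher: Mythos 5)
Your proof is correct and takes essentially the same route as the paper's: on each half-line the resolvent equation reduces to the ODE $\lambda^2 f - f'' = g$, finiteness of the limits at $\pm\infty$ forces the choice of $C_\pm$ in \eqref{defce}, and the continuity and transmission conditions at $x=0$ yield the $2\times 2$ linear system whose unique solution is \eqref{defde}, with $f''(0+)=f''(0-)$ obtained, exactly as in the paper, from $f''=\lambda^2 f-g$ and continuity of $f$ at $0$. The only differences are organizational: the paper derives the constants as necessary conditions (so uniqueness is obtained en route), whereas you verify the formula directly and settle uniqueness separately through the homogeneous equation (correctly invoking the standing assumption $p+q>0$), and your explicit asymptotic computation at $\pm\infty$ (dominated convergence plus L'Hospital, giving $f(x)\to g(\pm\infty)/\lambda^2$) spells out a verification of condition (b) that the paper leaves implicit.
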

\begin{proof}
On the right half-axis, a solution $f$ to the resolvent equation
solves the differential equation 
\begin{equation}
\label{010409-21}
\lam^2 f(x)- f''(x) =g(x)
\end{equation} and is thus of the
form given in the upper part of \eqref{deff}. Since we {stipulate}
that the limit $\lim_{x\to \infty} f(x)$  exists and is finite, we
must take $C_+$ as in \eqref{defce}. Analogous {argument}
establishes the lower part of \eqref{deff} with $C_-$ of equation
\eqref{defce}. {Since $f$ is to belong to $\cer$, we should have $f(0-) =f(0+) $ and by condition (c) of the definition of $\dom{\dabg}$, we should have $qf'(0-) =
pf'(0+)$.} These {two} conditions hold iff 
$$
C_-+D_- = C_+ +D_+\quad\mbox{ and }\quad p(C_+ -D_+) = q(D_- - C_-)
$$
{and the unique solution to this system for unknown $D_-$ and $D_+$ is given by \eqref{defde}.}
 
{We note that,} due to the fact that $f$ satisfies the  differential
equation \eqref{010409-21} on each half axis $\RR_\pm$, the condition $f(0-) =f(0+) $ 
implies that $f''(0-)=f''(0+)$.  {This proves that the function $f$ defined above is indeed a member of $\dom{\dabg}$.} 
 \end{proof}

For a future reference, we note that our lemma immediately implies the  following alternative formula for $f= (\lam^2 - \dabg )^{-1}g$: 
 \begin{equation}\label{deffp} f(x) = \begin{cases}D_+ \e^{-\lam x} +
     \dfrac 1{2\lam} \bigintss_{\,0}^{+\infty} \e^{-\lam |x-y|} g(y) \ud y, & x \ge
     0 ,\\ 
& \\
D_- \e^{+\lam x} + \dfrac1{2\lam } \bigintss_{-\infty}^0 \e^{-\lam |x-y|} g(y) \ud y, & x \le 0 .
\end{cases} \end{equation}

\newcommand{\gier}{g_{\textrm {right}}}
\newcommand{\giel}{g_{\textrm {left}}}

\begin{thm}\label{prop:3} {Suppose that $p+q>0$}. Then,  $\{\cabg (t),
  t \in \RR\}$ is a strongly continuous cosine family with generator $\dabg$. In addition
    \begin{equation}
      \label{011309-21}
 \|\cabg (t) \| \le  M , \qquad t \in \RR,
\end{equation}
where 
\begin{equation*}
M= M(p,q) \coloneqq \frac{2\max (p,q)}{p+q}.
\end{equation*}
\end{thm}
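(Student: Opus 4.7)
The plan is to leverage Proposition~\ref{prop:2}: since $\{\cab(t)\}_{t\in\RR}$ is already a bounded, strongly continuous cosine family on $L^1(\RR)$, taking Banach-space adjoints shows that $\{\cabg(t)\}_{t\in\RR}$ defined by \eqref{jawna} automatically satisfies the cosine functional equation (in the weak*-sense) on $L^\infty(\RR)$. It therefore remains to establish four things: (i) each $\cabg(t)$ maps $\cer$ into $\cer$, (ii) $\|\cabg(t)\|_{\cer\to\cer}\le M$, (iii) the family is strongly continuous on $\cer$, and (iv) its generator coincides with $\dabg$.

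For (i), one verifies the continuity of $\cabg(t)f$ at the three potentially delicate points $x = \pm t$ (where the pieces of \eqref{jawna} meet) and $x=0$ (where the transmission jump occurs). A short substitution shows that at $x=\pm t$ both neighbouring expressions reduce to $\tfrac12 f(\pm 2t) + \tfrac12 f(0)$, while at $x = 0$ both one-sided formulas collapse to $\tfrac{p}{p+q}f(t) + \tfrac{q}{p+q}f(-t)$; existence of finite limits at $\pm\infty$ is inherited from the outer piece $C^*(t)f$. For (ii), after rearranging \eqref{jawna} on $-t<x\le 0$ as
\[
\cabg(t)f(x) = \tfrac12 f(x-t) + \tfrac{p}{p+q}f(x+t) + \tfrac{q-p}{2(p+q)}f(-x-t),
\]
one observes that the three coefficients always sum to $1$: when $p \le q$ they are all non-negative (so the sum of absolute values equals $1$), while when $p > q$ the last coefficient becomes negative and the sum of absolute values equals $2p/(p+q)$. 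A symmetric calculation on $0<x<t$ yields $2q/(p+q)$ in the opposite regime, and on $|x|\ge t$ the bound $1$ is trivial; altogether $\|\cabg(t)\|\le M$. For (iii), strong continuity at $t=0$ follows from uniform continuity of $f\in\cer$ on the two-point compactification of $\RR$: on $|x|>t$ one has $|\cabg(t)f(x) - f(x)|\le\omega_f(t)$, while on $|x|<t$ the arguments appearing in \eqref{jawna} all lie within distance $2t$ of $x$, so since the coefficients sum (in absolute value) to at most $M$,
\[
|\cabg(t)f(x) - f(x)|\le M\,\omega_f(2t),
\]
where $\omega_f$ is the modulus of continuity of $f$.

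Finally, to identify the generator as $\dabg$, I would compute for $g \in \cer$ the Laplace transform $\int_0^\infty e^{-\lambda t}\cabg(t)g(x)\,dt$ by integrating \eqref{jawna} region by region, and check that it coincides with $\lam f$, where $f = (\lam^2 - \dabg)^{-1}g$ is the resolvent provided by Lemma~\ref{lem:2}. The integrals over the outer region produce the exponential/sinh kernels of \eqref{deff}, while the contributions of the middle regions $-t<x\le 0$ and $0<x<t$ combine to form precisely the cross-coupled boundary constants $C_\pm, D_\pm$ of \eqref{defce}--\eqref{defde}. Since the generator of a strongly continuous cosine family is uniquely determined by its resolvent (cf.\ \cite[Proposition~3.14.4]{abhn}), this identifies $\dabg$. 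The main technical obstacle is exactly this Laplace computation: verifying that the piecewise integration over the middle regions reproduces the cross-coupled constants $D_\pm$ is the algebraic heart of the method of images, and it is precisely where the hypothesis $p+q > 0$ enters, since $D_\pm$ feature $p+q$ in their denominators.
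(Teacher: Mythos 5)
Your computations are all correct (the seam values at $x=\pm t$ and $x=0$, the coefficient sums yielding the bound $M$, the modulus-of-continuity estimate), but your opening move is circular relative to the paper's own logic. You deduce the cosine functional equation for $\{\cabg(t),\,t\in\RR\}$ by taking adjoints in Proposition \ref{prop:2}; however, the remark following Proposition \ref{prop:2} states explicitly that the functional equation for $\{\cab(t),\,t\in\RR\}$ is itself \emph{a consequence of} Theorem \ref{prop:3} --- the omitted proof of Proposition \ref{prop:2}, modelled on Proposition \ref{prop:1}, is not meant to deliver the functional equation independently. So you cannot feed the $L^1$ functional equation into the proof of Theorem \ref{prop:3}. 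This also creates a secondary gap: you verify strong continuity of $t\mapsto\cabg(t)f$ only at $t=0$ and implicitly rely on the functional equation to propagate it to all $t$; without the equation you must check continuity at every $t$. (This does hold, by the same kind of estimate: e.g.\ for $t<x<t'$ the inner and outer formulas in \eqref{jawna} differ by $\frac{p-q}{2(p+q)}\bigl[f(t'-x)-f(x-t')\bigr]$, which is small since both arguments lie within $|t'-t|$ of $0$ and $f\in\cer$ is uniformly continuous --- but you did not supply this.)

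The good news is that the gap is repairable from your own material, and the repaired argument is essentially the paper's. Your Laplace-transform computation --- matching $\int_0^\infty \e^{-\lam t}\cabg(t)g(x)\ud t$ with $\lam(\lam^2-\dabg)^{-1}g(x)$ via Lemma \ref{lem:2} --- is precisely the paper's key identity \eqref{key}, and once you have it, together with the bound $M$ and strong continuity at all $t$, generation in the sense of \cite{abhn} holds by definition and \cite[Proposition 3.14.4]{abhn} then \emph{yields} the functional equation rather than presupposing it; the adjoint step should simply be discarded. The paper proceeds slightly differently from \eqref{key}: it avoids proving norm continuity by hand, using only pointwise continuity in $t$ and the pointwise bound $|\cabg(t)g(x)|\le M\|g\|$ to obtain the resolvent-derivative estimates $\bigl\|\frac{\ude^n}{\ude\lam^n}\lam(\lam^2-\dabg)^{-1}\bigr\|\le M n!/\lam^{n+1}$, then invokes the Sova--Da Prato--Giusti generation theorem to produce a cosine family $\{\mathcal C(t),\,t\in\RR\}$ generated by $\dabg$ with $\|\mathcal C(t)\|\le M$, and finally identifies $\mathcal C(t)g(x)=\cabg(t)g(x)$ by uniqueness of scalar Laplace transforms, which delivers strong continuity of $\{\cabg(t),\,t\in\RR\}$ as a by-product. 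Either route works; yours requires the extra continuity verification sketched above, while the paper's trades it for the generation-theorem estimates.
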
 


\begin{proof}Formula \eqref{jawna} implies that for each $g\in \cer$ and $x\in
  \RR$, the function $[0,\infty) \ni t \mapsto \cabg(t) g(x)$ is  continuous and bounded by $M \|g\|$.
A~direct calculation, using \eqref{jawna}, leads to the following
formula for the  Laplace transform: 
\[ \int_0^\infty \e^{-\lam t}\cabg (t) g(x) \ud t = \frac 12 \int_{-\infty}^\infty \e^{-\lam |x-s| } \fal \gier (s) \ud s, \]
where $\fal \gier (s) := g(s)$ for $s\ge 0$ and 
$$
\fal \gier (s) := \frac {2q}{p+q} g(s) + \frac {p-q}{p+q}
g(-s)\quad\mbox{ for }s< 0.
$$


 We note that $\fal \gier $ is a continuous function on $\RR$ that
 extends the right part of $g$.
 A glance
 at \eqref{defce} and \eqref{defde} shows that the upper part of
 \eqref{deffp} can be rewritten in the form
\[ f(x) =  \frac 1{2\lam}  \int_{-\infty}^\infty \e^{-\lam |x-y| } \fal \gier (y) \ud y, \qquad x \ge 0 .\]
Therefore, for $x\ge 0$, 
\begin{equation}\label{key} \int_0^\infty \e^{-\lam t}\cabg (t) g(x) \ud t = \lam (\lam^2 - \dabg)^{-1} g (x), \end{equation}
and a similar analysis on the left half-axis, involving the extension
$\fal \giel $ of the left part of $g$ which is given by 
$$
\fal \giel (s) = \frac {q-p}{p+q} g(-s) + \frac {2p}{p+q}
g(s)\quad\mbox{ for }s\ge 0,
$$ 
shows that \eqref{key} may be extended to the entire $\RR$.

Equation \eqref{key} is a key to the proof of the theorem. Since the mapping  $\RR_+
\ni \lam \mapsto \rez{\dabg}$ is infinitely   differentiable, even in
the operator norm, so is
$\lam \mapsto \lam (\lam^2 - \dabg)^{-1}$. Because the convergence in
the norm of $\cer $ implies the pointwise convergence, we conclude that
\[ \left [\frac {\ude^n}{\ude \lam^n} \lam (\lam^2 - \dabg)^{-1} g \right ] (x) = \int_0^\infty \e^{-\lam t} (-t)^n \cabg (t) g(x) \ud t, \]    
for $x\in \RR, \lam >0$ and $ n\in \mathbb N.$
This in turn yields  
\[ \left \| \frac {\ude^n}{\ude \lam^n} \lam (\lam^2 - \dabg)^{-1} g
  \right \|\le M\|g\| \int_0^\infty \e^{-\lam t} t^n \ud t = \frac
  {M\|g\| n!}{\lam^{{n+1}}},\]
showing that the estimates of the Sova--Da Prato--Giusti generation
theorem are satisfied, see e.g. \cite[p. 119]{goldstein}, or the
original papers  \cite{dapratogiusti} and \cite{sovac}. Since $\dabg$
is densely defined, we conclude that this operator is a generator of a
cosine family, say {$\{\mathcal C(t), t\in \RR\}$} such that $\|\mathcal
C(t) \|\le M, t \in \RR.$
Now, the relation 
$$ 
\lam (\lam^2 - \dabg)^{-1} = \int_0^\infty \e^{-\lam t} \mathcal C(t)
\ud t,
$$ 
see the already cited  \cite[Proposition 3.14.4]{abhn}, combined with the fact that the
convergence in $\cer$ implies the pointwise convergence {allows us to
infer  that }
\[ 
 \left [\lam (\lam^2 - \dabg)^{-1}g \right ](x)  = \int_0^\infty
 \e^{-\lam t} \mathcal C(t) g (x) \ud t
\]
for all $x\in \RR,\, \lam >0$ and $g\in \cer$.
But, in view of
\eqref{key}, this means that two continuous functions: $t \mapsto
\mathcal C(t)g(x)$ and $t \mapsto \cabg (t) g(x)$ (we think of $g$ and
$x$ as temporarily fixed) have identical Laplace transforms, and
consequently these functions   coincide. This proves  
that $\mathcal C(t)g= \cabg (t)g$ for all $t \ge 0$ and $g \in \cer$,
showing {in turn} that $\{\cabg (t), t \in \RR \}$ is identical to
$\{\mathcal C(t), t \in \RR\}$. In particular, the former is a
cosine family generated by $\dabg$ and estimate \eqref{011309-21} follows, as claimed. \end{proof}

\subsection{Transition probability densities for the process governed by
  ${\frac 12}\dabg $} 
\label{sec6.3}
As the generator of a cosine family, $\dabg$ is also the generator of a
semigroup. The Weierstrass formula (see e.g.  \cite[p. 219]{abhn} or \cite[p. 120]{goldstein}) tells us that 
\[ \e^{t \frac 12 \dabg} f(x) = \sqrt{ \frac 2{\pi t}} \int_0^\infty \e^{-\frac {s^2}{2t}} \cabg (s) f(x) \ud s  \]
for $ t>0, x \in \RR$ and $ f \in \cer.$ 
Here, again, we use the fact that norm convergence in $\cer$ implies pointwise convergence.   
The semigroup $\sem{\frac 12 \dabg}$ describes the same stochastic
process as $\sem{\frac 12 \dab}$ but from a different perspective:
whereas the later determines evolution of the densities, the former
determines the evolution in time of {the expected values of a family of
  observables of the  value  of the process}. More specifically, 
\begin{equation}
  \label{021309-21}
\e^{t \frac 12 \dabg} f(x) = E_x \, f(w_{p,q}(t)) 
\end{equation}
where $w_{p,q}(t), t\ge 0$ is the underlying process,   $E_x$ is the
expected value conditional on the process starting at $x$, {and $f  \in \cer$}. 

Our last goal is to combine these two relations with the explicit form
of $\cabg (t)$ given in \eqref{jawna}, to find closed form formulae
for transition {probability densities} of the process $w_{p,q}
(t), t\ge 0$, that corresponds to the generator $\Delta_{p,q}$.
 To start with,
for $x>0$, by \eqref{jawna},  we can write
\begin{align*} \sqrt{2\pi t}\,  \e^{t \frac 12 \dabg} f(x) &= \int_0^\infty \e^{-\frac {s^2}{2t} } [f(x+s) +f(x-s)] \ud s \\ &\phantom{=} +  \frac {p-q}{p+q} \int_x^\infty \e^{-\frac {s^2}{2t} } \bigl [  f(s-x) -  f(x-s)\bigr ] \ude s \\
&= \int_{-\infty}^\infty \e^{-\frac{(y-x)^2}{2t}} f(y) \ud y \\
&\phantom{=} + \frac {p-q}{p+q} \left [ \int_0^\infty \e^{-\frac {(y+x)^2}{2t}} f(y) \ud y - \int_{-\infty}^0 \e^{-\frac {(y-x)^2}{2t}} f(y) \ud y \right ].
\end{align*}
Hence, by \eqref{021309-21}, for $x > 0$ and $t\ge 0$,
\[ E_x  \, f(w_{p,q}(t)) = \int_{-\infty}^\infty \gamma_{p,q}^+ (t,x,y) f(y) \ud y , \] 
where 
\[ \gamma_{p,q}^+ (t,x,y) \coloneqq\begin{cases}  \dfrac 1{\sqrt{2\pi
        t}}  \left ( \e^{-\frac {(y-x)^2}{2t}} +  \dfrac {p-q}{p+q}
      \e^{-\frac {(y+x)^2}{2t}}\right ), & y>0, \\
&\\ \dfrac {1}{\sqrt{2\pi t}} \dfrac {2q}{p+q}  \e^{-\frac {(y-x)^2}{2t}}, &
 y<0 ,\end{cases}  
\]
is the probability density for the position of $w_{p,q} (t)$, given that the process starts at $x>0$. 
Similarly, for $x<0$, 
 \[ E_x  \, f(w_{p,q}(t)) = \int_{-\infty}^\infty \gamma_{p,q}^- (t,x,y) f(y) \ud y , \] 
 where 
 \[ \gamma_{p,q}^- (t,x,y) \coloneqq\begin{cases}  \dfrac 1{\sqrt{2\pi
         t}}  \left ( \e^{-\frac {(y-x)^2}{2t}} +  \dfrac {q-p}{p+q}
       \e^{-\frac {(y+x)^2}{2t}}\right ), & y<0, \\
&\\ \dfrac {1}{\sqrt{2 \pi t}} \dfrac{2p}{p+q}  \e^{-\frac {(y-x)^2}{2t}}, & y>0 .\end{cases}  \]
These formulae confirm the already announced fact  that,  
even though the membrane situated at $x=0$ is apparently completely
permeable for  the process governed by $\frac 12 \dabg$, there is
some  residual asymmetry between the way particles filter from the right
to the left {and in the opposite direction}. 
  
To see this, consider, for example, the case of $p>q$ in which
filtering to the right is `easier' than filtering to the left. Then,
the probability that a particle starting at an $x\not = 0$ will be in
a subset of $(0,\infty)$ at time $t$ is larger than the same
probability in the standard, that is, symmetric Brownian motion. This
is {due to the fact that}
\[  \gamma_{p,q}^- (t,x,y) > \gamma_{p,p}^- (t,x,y) \quad \text{ and } \quad \gamma_{p,q}^+ (t,x,y) > \gamma_{p,p}^+ (t,x,y), \qquad y >0 ;\]
of course, this comes at the cost of reversing these inequalities in the left half-axis. 

It is also instructive to look at the particular subcase of $q=0$
(total impermeability from the right): then $\gamma_{p,0}^+$
coincides with the transition probability density of the reflected Brownian
motion on the right half-axis, and is zero on the left half-axis. At
the same time, $\gamma_{p,0}^-$ coincides with the transition
probability density of a minimal Brownian motion on the left half-axis, and all the probability mass that is lost at $x=0$ is transferred to the right half-axis. 
\begin{remark}
\em We note that the formulae for the transition probability densities of a
  skew Brownian motion have been derived in a different way  in
  \cite[eq. (17) p. 420]{lejayskew}.  
\end{remark}

\bf Acknowledgment. \rm We would like to thank K. Burdzy for referring
us to the literature on a skew Brownian motion, and in particular 
to the paper \cite{lejayskew}
 of
A. Lejay.

\bibliographystyle{plain} 
\bibliography{bibliografia}

\end{document}